\newtheorem{theorem}{Theorem}[section]
\newtheorem{lemma}[theorem]{Lemma}
\theoremstyle{definition}
\newtheorem{definition}[theorem]{Definition}
\theoremstyle{remark}
\newtheorem{remark}[theorem]{Remark}
\newcommand\R{\mathbb{R}}
\newcommand\N{\mathbb{N}}
\newcommand\E{\mathbb{E}}
\newcommand\B{\mathcal{B}}
\newcommand\supp{\mathrm{supp}}
\numberwithin{equation}{section}
\newcommand{\abs}[1]{\lvert#1\rvert}
\begin{document}

\title[Homogenization for the cubic NLS on $\mathbb R^2$]{Homogenization for the cubic nonlinear Schr\"odinger equation on $\mathbb R^2$}

\author{Maria Ntekoume}
\address{Department of Mathematics, University of California, Los Angeles, CA 90095, USA}
\email{mntekoume@math.ucla.edu}


\begin{abstract}
We study the defocusing inhomogeneous mass-critical nonlinear Schr\"odinger equation on $\mathbb R^2$
$$i \partial_t u_n +\Delta u_n=g(nx) \abs{u_n}^2 u_n$$
for initial data in $L^2(\mathbb R^2)$. We obtain sufficient conditions on $g$ to ensure existence and uniqueness of global solutions for $n$ sufficiently large, as well as homogenization.
\end{abstract}

\maketitle

\section {Introduction}
We consider the Cauchy problem for the cubic nonlinear Schr\"odinger equation with inhomogeneous nonlinearity

\begin{equation}\label{eqn:NLS_n} \tag{$NLS_n$}
    \begin{cases}
    i \partial_t u_n +\Delta u_n=g(nx) \abs{u_n}^2 u_n\\
    u_n(0)=u_0 \in L^2(\R^2)
    \end{cases}
\end{equation}
and the homogeneous defocusing cubic nonlinear Schr\"odinger equation
\begin{equation}\label{eqn:NLS} \tag{$NLS$}
    \begin{cases}
    i \partial_t  u +\Delta u=\bar g  \abs{u}^2 u\\
    u(0)=u_0 \in L^2(\R^2)
    \end{cases}
\end{equation}
where $g\in L^\infty(\R^2)$ and $\bar g\geq 0$. If $\bar g=0$, (NLS) becomes the linear Schr\"odinger equation.

The (NLS) arises in various physical contexts in the description of nonlinear waves. For example, (NLS) models the propagation of intense continuous wave laser beams in a homogeneous Kerr medium, in which case the nonlinearity is generated as a result of the interaction of the electric field and the atoms of the dielectric medium. By comparison, the inhomogeneous ($NLS_n$) describes the propagation of laser beams  in an inhomogeneous medium; in this context, $n$ is proportional to the ratio between the scale of the laser beam and the spacing of the atoms of the medium.

In \cite{Merle}, Merle studied the inhomogeneous mass-critical NLS
$$iu_t+\Delta u=-g(x)|u|^{\frac 4 d}u,\,\,\,x\in \R^d $$
and obtained sufficient conditions on the coupling function $g$ to ensure the existence of blow-up solutions, as well as the nonexistence of minimal blow-up solutions in $L^2$. In \cite{RS}, Rapha\"el and Szeftel  discovered necessary and sufficient conditions on $g$ to ensure the existence and uniqueness of critical blow-up solutions for the same problem. In \cite{Farah} and \cite{Guzman}, the well-posedness of the inhomogeneous NLS was investigated for a specific family of coupling functions, namely for the problem
$$iu_t+\Delta u=\pm |x|^{-b}|u|^{\alpha}u,\,\,\,x\in \R^d $$
where $\alpha, b>0$. In \cite{CG}, Combet and Genoud established a classification of minimal blow-up solutions of this problem with $\alpha=\frac{4-2b}{d}$. The blow-up of solutions of mass-critical NLS with time-oscillating nonlinearity is studied in \cite{O} and \cite{ZZ}.

The question we will consider in this paper is that of homogenization for this problem. Homogenization problems have received a lot of attention and one of the most popular methods is the spectral approach based on the Floquet-Bloch theory. This method is used in \cite{Suslina} to study the behavior of the solution $u_n$ of the Cauchy problem for the Schr\"odinger-type equation 
$$i \partial _t u_n =\mathcal A_n u_n +F,$$
where $\mathcal A_n$ is a self-adjoint strongly elliptic second order differential operator with periodic coefficients depending on $nx$. For a special case of this problem, namely the Schr\"odinger equation with large periodic potential 
$$i \partial _t u_n -\nabla \cdotp [g(nx)\nabla u_n]+(n^2 c(nx)+d(x,nx))u_n=0,$$
where $g(y)$, $c(y)$, $d(x,y)$ are real-valued bounded functions defined for $x\in\R^d$, $y\in \mathbb T^d$, and $g(y)$ is symmetric and uniformly positive definite, homogenization was obtained in \cite{AP}. The random Schr\"odinger equation with a time-dependent potential
$$i \partial_t u +\frac 1 2 \Delta u -\varepsilon V(t,x) u=0$$
with the low frequency initial condition $u(0,x)=u_0(\varepsilon^\alpha x)$ for some $\alpha>0$ is treated in \cite{GR}, extending the homogenization result of Zhang and Bal (\cite{ZB1, ZB2}) for the case when $\alpha=1$ and $V(x)$ is a (not time-dependent) mean zero Gaussian random potential. Another related problem that has been extensively studied is the NLS with a time oscillating nonlinearity. In \cite{CS}, Cazenave and Scialom consider the NLS with time-oscillating nonlinearity
\begin{equation}
    \begin{cases} 
    i \partial_t u_\omega +\Delta u_\omega +\theta (\omega t) |u_\omega|^p u_\omega=0\notag\\
    u_\omega(0)=u_0 \in H^1(\R^d)
    \end{cases}
\end{equation}
where $p$ is an $H^1$-subcritical exponent and $\theta$ is a periodic function; they showed, firstly, that, as $|\omega|\to\infty$, the solution $u_\omega$ converges locally in time to soliton solutions of the stationary equation, obtained by the replacement of $\theta(t)$ with its time-average value, and secondly, that, if the limiting solution is global and has a certain decay property as $t\to\infty$, then $u_\omega$ is also global if $|\omega|$ is sufficiently large. Similar results for the critical problem where obtained by Fang and Han in \cite{FH}. The NLS with time-oscillating nonlinearity and dissipation
\begin{equation}
    \begin{cases} 
    i \partial_t u_\omega +\Delta u_\omega +\theta (\omega t) |u_\omega|^p u_\omega+ i \zeta (\omega t)u_\omega=0\notag\\
    u_\omega(0)=u_0 \in H^s(\R^d)
    \end{cases}
\end{equation}
for $0<s<\min\{1, \frac d 2\}$, $0<p<\frac{4}{d-2s}$ and $\theta, \zeta$ continuous periodic functions was studied in \cite{FZS}; it was shown that under some conditions, as $\omega\to \infty$, the solution will locally converge in Besov spaces to the solution of the homogenized equation $i u_t +\Delta u +\theta_0 |u|^p u+ i \zeta_0 u=0$ with the same initial condition, where $\theta_0$ and $\zeta_0$ are the average of $\theta$ and $\zeta$ respectively, and that if $\zeta_0$ is large enough, then the solution $u_\omega$ is global for sufficiently large $\omega$.

In this paper, we address the question of well-posedness for ($NLS_n$) and the behavior of the solutions as $n\to \infty$, which cannot be answered by any of the aforementioned results. We obtain sufficient conditions for the coupling function $g$ to ensure the existence and uniqueness of global solutions to ($NLS_n$) for $n$ sufficiently large, as well as a global in time homogenization result. More precisely, we show that, under these conditions, solutions to ($NLS_n$) converge to the solution to the homogeneous defocusing (NLS) in $L^4_{t,x}(\R\times \R^2)$. These results are recorded in the following theorem.

\begin{theorem} 
Let $u_0\in L^2(\R^2)$. Suppose $g\in L^\infty(\R^2)$, $\bar g\geq 0$ and for every $R>0$
\begin {equation}
\lim_{n\to \infty}\|(-\Delta +1)^{-1}(g(nx)-\bar g)\|_{L^\infty (|x|\leq R)}=0.
\end{equation}
Then for $n$ sufficiently large there exists a unique global solution $u_n$ to ($NLS_n$) with initial data $u_n(0)=u_0$; it scatters, in the sense that there exist $u_n^\pm \in L^2(\R^2)$ such that
$$\|u_n(t,x)-e^{it\Delta}u_{n}^{\pm}\|_{L^2(\R^2)}\to 0\quad\text{as}\quad t\to \pm\infty.$$
Moreover, if $u$ is the solution to (NLS) with initial data $u(0)=u_0$, then 
\begin{equation}
\lim_{n\to \infty}\|u_n-u \|_{L^4_{t,x}(\R\times \R^2)}=0.
\end{equation}

\end{theorem}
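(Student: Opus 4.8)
The plan is to treat this as a perturbative argument around the homogeneous equation, using the mass-critical scattering theory for (NLS) (Dodson) as the hard input, and the hypothesis on $(-\Delta+1)^{-1}(g(nx)-\bar g)$ as the mechanism that makes the inhomogeneous term negligible at the level of the Duhamel formula.

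\textbf{Step 1: The limiting dynamics and a stability framework.} Since $u_0\in L^2(\R^2)$, by Dodson's theorem the solution $u$ to (NLS) (defocusing mass-critical, or linear if $\bar g=0$) is global, scatters, and obeys $\|u\|_{L^4_{t,x}(\R\times\R^2)}\le M<\infty$ for some $M=M(\|u_0\|_{L^2})$. I would first record the mass-critical stability/perturbation lemma for (NLS): if $\tilde u$ solves $i\partial_t\tilde u+\Delta\tilde u=\bar g|\tilde u|^2\tilde u+e$ with $\|\tilde u\|_{L^4_{t,x}}\le L$, $\|\tilde u(0)-u_0\|_{L^2}$ small, and the error $e$ small in the dual Strichartz norm $N(\R):=L^{4/3}_{t,x}(\R\times\R^2)$ (plus, if needed, $L^1_tL^2_x$ on compact time intervals), then $\|\tilde u-u\|_{L^4_{t,x}}$ is small; the smallness is quantified in terms of $L$, $M$. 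This is standard and, given the earlier sections of the paper, may already be available.

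\textbf{Step 2: Local theory and a uniform-in-$n$ a priori bound.} For each fixed large $n$, ($NLS_n$) is locally well-posed in $L^2$ by the usual Strichartz contraction (here $g\in L^\infty$ only enters through $\|g\|_{L^\infty}$, so the local theory is uniform in $n$). To upgrade to a global solution with a uniform $L^4_{t,x}$ bound, I would run a continuity/bootstrap argument: on any interval $I$ where $\|u_n\|_{L^4_{t,x}(I\times\R^2)}\le 2M$, I want to apply Step 1 with $\tilde u=u_n$, treating the genuine error as
\[
e_n=(g(nx)-\bar g)\,|u_n|^2 u_n .
\]
The point is that $u_n$ and $u$ then differ by an amount controlled by $\|e_n\|_{N(I)}$, which closes the bootstrap provided $\|e_n\|_{N(I)}$ can be made small. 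So the whole theorem reduces to:

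\textbf{Step 3 (the crux): smallness of the inhomogeneous error.} One cannot bound $e_n$ in $L^{4/3}_{t,x}$ directly, because $g(nx)-\bar g$ does not go to zero in any pointwise or $L^p$ sense — it only oscillates. The hypothesis is that it is small \emph{after applying $(-\Delta+1)^{-1}$}, i.e. $g(nx)-\bar g\to 0$ in $H^{-2}_{\mathrm{loc}}$-type norms. So the argument must exploit the Duhamel integral
\[
\int_0^t e^{i(t-s)\Delta}\big[(g(ns\cdot)-\bar g)|u_n|^2u_n\big]\,ds
\]
and integrate by parts / commute the multiplier against the propagator, writing $g(nx)-\bar g=(-\Delta+1)\big[(-\Delta+1)^{-1}(g(nx)-\bar g)\big]$ and moving the $(-\Delta+1)$ onto $e^{i(t-s)\Delta}|u_n|^2u_n$, which costs time-derivatives/Laplacians that are absorbed using the equation and the frequency-localization of the data. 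After a frequency truncation of $u_0$ (so that we may assume $u_n$ lives at frequencies $\le N_0$ up to an error that is small in $L^2$ by the local theory and can be fed into the stability lemma) the gain is: on a fixed compact space-time region, the nonlinearity $|u_n|^2u_n$ is (after truncation) smooth with bounded derivatives, and pairing it against $(-\Delta+1)^{-1}(g(nx)-\bar g)$ on $\{|x|\le R\}$ gives something $\to0$ by hypothesis; the tail $|x|>R$ and the high frequencies are handled by choosing $R$, $N_0$ first and then $n$ large. I expect this commutation-and-truncation estimate — making rigorous the passage from ``$(g(nx)-\bar g)$ is small in $H^{-2}$'' to ``$e_n$ is small in dual Strichartz, uniformly along the bootstrap'' — to be the main obstacle; in particular one must be careful that the implicit constants depend only on $M$, $\|u_0\|_{L^2}$, $\|g\|_{L^\infty}$ and not on the (a priori unknown) global behavior of $u_n$.

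\textbf{Step 4: Conclusion.} With Step 3 in hand, the bootstrap of Step 2 closes, giving for all large $n$ a global solution $u_n$ with $\|u_n\|_{L^4_{t,x}(\R\times\R^2)}\le 2M$; scattering then follows from the standard argument (finite $L^4_{t,x}$ norm $\Rightarrow$ the Duhamel integral converges in $L^2$ as $t\to\pm\infty$, using that $\|e_n\|_{N(\R)}$ is finite). Finally, applying the stability lemma once more on all of $\R$ with error $e_n$ now known to satisfy $\|e_n\|_{N(\R)}\to0$ (by Step 3 with $I=\R$, since the bound is uniform) yields $\|u_n-u\|_{L^4_{t,x}(\R\times\R^2)}\to0$, which is the homogenization statement.
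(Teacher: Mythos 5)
Your overall framework — Dodson's theorem plus a perturbation argument driven by the smallness of $(-\Delta+1)^{-1}(g(nx)-\bar g)$, with an integration by parts to exploit that smallness — has the right ingredients. But you have reversed the direction of the perturbation, and this is not a stylistic choice: it breaks the argument. You propose to treat $u_n$ as an approximate solution to the \emph{homogeneous} (NLS), which forces the error term to be $e_n=(g(nx)-\bar g)\,|u_n|^2u_n$, i.e.\ to involve the unknown $u_n$. Everything you sketch for controlling $e_n$ — frequency truncation at scale $N_0$, approximation by a smooth compactly supported function, smoothness of the nonlinearity after truncation — is available for $u$, the known constant-coefficient NLS solution, by persistence of regularity, but it is \emph{not} available for $u_n$. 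The coefficient $g(nx)$ lives at frequency $\sim n$, so $u_n$ cannot be expected to stay near frequencies $\le N_0$ even if $u_0$ is band-limited; the bootstrap hypothesis $\|u_n\|_{L^4_{t,x}}\le 2M$ gives no control over the frequency content or spatial localization of $u_n$, so there is no way to make the constants in Step 3 independent of $n$ and of the unknown profile of $u_n$. You flag this concern yourself at the end of Step 3; it is fatal, not a technicality.

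The paper runs the perturbation in the opposite direction. It first proves a stability theorem for $(NLS_n)$ itself (Theorem 3.4), noting that the local and stability theory for $(NLS_n)$ is uniform in $n$ because $\|g(n\cdot)\|_{L^\infty}=\|g\|_{L^\infty}$, and then treats the \emph{known} global solution $u$ of (NLS) as an approximate solution to $(NLS_n)$, with error $e_n=(\bar g-g(nx))\,|u|^2u$. Because this error involves only $u$, one may freely split $u$ into low and high frequencies (with the high-frequency piece small via persistence of regularity and the mass-critical stability result, Theorem 1.5), approximate the low-frequency part by a smooth compactly supported $v$, integrate by parts in time using the product identity for $(-\Delta+1)^{-1}$, and then invoke the hypothesis on $(-\Delta+1)^{-1}(g(nx)-\bar g)$ on a fixed ball together with the rapid decay of $P_{\le N}v$ outside it — all with constants depending only on $\|u_0\|_{L^2}$ and $\|g\|_{L^\infty}$. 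If you redo your Steps 1--4 in this direction (stability for $(NLS_n)$ applied with $\tilde u = u$, and the error written in terms of $u$ alone), your outline becomes essentially the paper's proof.
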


\begin{remark}
In fact, the solutions $u_n$ converge to $u$ in the Strichartz norm $\|\cdotp\|_S$, which will be defined in Section 2. 
\end{remark}

\begin{remark} One of the virtues of this result is that $g$ need not be a non-negative function. Our coupling function $g$ can take on a negative sign on subsets of $\R^2$, which suggests that we should be worried for focusing behavior. However, our result guarantees that homogenization arrests the dreaded blow-up.
\end{remark}

In the context of nonlinear optics that we briefly discussed above, this theorem implies that, under certain conditions and for high intensity laser beams, the propagation in an inhomogeneous medium approximates the propagation in a homogeneous Kerr medium, which is much better understood. It is worth noting that we only assume finite mass for the initial data; this is better fit to describe the optical power of a continuous wave laser beam. In general, we should not expect the propagation of an optical wave to preserve smoothness.

The idea for the proof of Theorem 1.1 is to approach the inhomogeneous problem as a perturbation of the homogeneous one for large values of $n$, which we know is globally well-posed with spacetime bounds, as demonstrated by Dodson. We should emphasize that a naive application of perturbation would not be effective, as there is no indication that $g-\bar g$ is small. Instead, we discover and exploit the non-resonant behavior of the coupling function, which leads to a more delicate perturbation argument. Unlike the usual homogenization problems, for which existence of global solutions is trivial and one only worries about convergence, in our case it is the homogenization that guarantees global well-posedness.

\begin{theorem}[Dodson, \cite{Dodson}]
For $u_0\in L^2(\R^2)$, there exists a unique global solution $u$ to (NLS) with initial data $u_0$ such that $$\|u\|_{L^4_{t,x}(\R\times \R^2)} \leq C(\|u_0\|_{L^2(\R^2)}).$$
The solution $u$ scatters, in the sense that there exist $u_\pm \in L^2(\R^2)$ such that
$$\|u(t,x)-e^{it\Delta}u_\pm\|_{L^2(\R^2)}\to 0\quad\text{as}\quad t\to \pm\infty.$$
Moreover, if $u_0\in H^s(\R^2)$ for $s>0$, then $u \in L^\infty_t H^s_x(\R \times \R^2)$.
\end{theorem}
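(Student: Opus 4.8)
The plan is to follow the concentration--compactness/rigidity method of Kenig--Merle, as developed for the mass-critical problem. First I would develop the local theory: in dimension two $(4,4)$ is a Schr\"odinger-admissible Strichartz pair and $L^4_{t,x}$ is the scaling-critical norm for the cubic, hence mass-critical, nonlinearity, so a contraction argument built on the Strichartz estimates for $e^{it\Delta}$ yields, for any $u_0\in L^2(\R^2)$, a unique maximal-lifespan solution, a small-data global theory with scattering, the blowup criterion that $\|u\|_{L^4_{t,x}}=\infty$ near any endpoint at which the solution fails to scatter, and a stability lemma comparing true solutions to approximate ones with small error and bounded $L^4_{t,x}$ norm. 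Assuming the theorem fails, I would set $M_c\in(0,\infty]$ equal to the least mass for which $\sup\{\|u\|_{L^4_{t,x}}:\|u(0)\|_{L^2}^2\le M\}$ is infinite, and, via the linear profile decomposition adapted to the full Schr\"odinger symmetry group (scaling, space and time translations, Galilean boosts, phase), the nonlinear profile decomposition, and the stability lemma, extract an \emph{almost periodic} minimal blowup solution $u$: a solution, global in time after a further reduction, with $\|u(0)\|_{L^2}^2=M_c$ and $\|u\|_{L^4_{t,x}}=\infty$, whose orbit is precompact in $L^2$ modulo the symmetries, i.e.\ $\{N(t)^{-1}e^{-ix\cdot\xi(t)}u(t,x(t)+\,\cdot\,/N(t))\}$ is precompact for some $N(t)>0$, $x(t),\xi(t)\in\R^2$. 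After normalizing $x(t)\equiv0$, $\xi(t)\equiv0$, $N(t)\le1$ and partitioning the time axis into characteristic intervals on which $N(t)$ is roughly constant, the task is to preclude the two surviving enemies: the quasi-soliton case, where $\int_\R N(t)^3\,dt=\infty$, and the rapid frequency cascade, where $N(t_n)\to0$ along some sequence.

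The decisive new ingredient is a long-time Strichartz estimate for the high-frequency part of the minimal blowup solution. Over a union $J$ of characteristic intervals I would bootstrap the Duhamel formula, using refined Strichartz estimates (the in/out decomposition and bilinear Strichartz estimates, which extract a gain from high--low frequency interactions) together with the almost periodicity of the orbit, to obtain a bound of the schematic form
\[
\|P_{\ge N}u\|_{L^4_{t,x}(J\times\R^2)}\ \lesssim\ o_N(1)+\big(N^{-1}\int_J N(t)^3\,dt\big)^{\theta}
\]
for some $\theta>0$, and to deduce from it that $\|\,\abs{\nabla}^{1/2}P_{\ge N}u\|_{L^\infty_t L^2_x}\to0$ as $N\to\infty$. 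This upgrades the merely $L^2$ critical element to one carrying genuine extra regularity; it is the step that compensates for the absence, in the mass-critical problem, of a conserved quantity coercive at the scaling-critical regularity --- an advantage the energy-critical problem enjoys for free.

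With the long-time Strichartz bounds available to absorb the errors produced by frequency truncation, I would then prove for the minimal blowup solution a frequency-localized interaction Morawetz inequality,
\[
\int_J\!\iint_{\R^2\times\R^2}\frac{\abs{P_{\ge N}u(t,x)}^2\,\abs{P_{\ge N}u(t,y)}^2}{\abs{x-y}}\,dx\,dy\,dt\ \lesssim\ (\text{data})+(\text{errors controlled by the preceding step}).
\]
Since the orbit is precompact and nontrivial, $\|P_{\ge N}u(t)\|_{L^2}$ is bounded below on a time set of substantial measure, so the left side dominates a fixed multiple of $\int_J N(t)^3\,dt$ up to tolerable corrections, while the right side is $o\big(\int_J N(t)^3\,dt\big)$ for $N$ large; letting $J\uparrow\R$ then forces $\int_\R N(t)^3\,dt<\infty$ and rules out the quasi-soliton case. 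The rapid frequency cascade I would exclude separately: the extra regularity from the long-time Strichartz estimate, combined with conservation of mass, shows that $N(t_n)\to0$ would push the mass to arbitrarily low frequencies, contradicting that by almost periodicity the entire (fixed) mass sits at frequency $\sim N(t)$. Hence no minimal blowup solution exists, $M_c=\infty$, and $\sup\{\|u\|_{L^4_{t,x}}:\|u_0\|_{L^2}^2\le M\}<\infty$ for every $M$, which is the asserted global spacetime bound. Scattering follows in the usual way: partitioning $\R$ into finitely many intervals on which $\|u\|_{L^4_{t,x}}$ is small, Strichartz controls the full Strichartz norm of $u$ and shows the Duhamel integral $\int_t^{\pm\infty}e^{-is\Delta}(\bar g\abs{u}^2u)\,ds$ converges in $L^2$, yielding $u_\pm$ with $\|u(t)-e^{it\Delta}u_\pm\|_{L^2}\to0$. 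For $u_0\in H^s(\R^2)$ with $s>0$, applying the fractional Leibniz rule on the same intervals propagates $\abs{\nabla}^s$-regularity interval by interval, giving $u\in L^\infty_t H^s_x$.

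The hard part is, without question, the long-time Strichartz estimate and the frequency-localized Morawetz estimate, together with their interaction. The difficulty is structural rather than merely technical: at the scaling-critical $L^2$ regularity there is no coercive conserved quantity, so the rigidity half of the argument cannot invoke a monotonicity formula directly, and the long-time Strichartz estimate is precisely the mechanism that manufactures enough additional regularity on the minimal blowup solution for a frequency-localized interaction Morawetz estimate --- and for the control of its truncation errors --- to become viable. The two estimates must be run as a coupled bootstrap over a carefully organized hierarchy of characteristic intervals, and making that bookkeeping (together with the choice of frequency thresholds) close is the principal obstacle; a further nontrivial ingredient is the Galilean-invariant linear profile decomposition used in the reduction, where profiles may live at widely separated frequency scales.
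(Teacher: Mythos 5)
This statement is not proved in the paper at all: it is Theorem 1.4, quoted verbatim as Dodson's theorem and used as a black box, so there is no ``paper proof'' to compare against. Your proposal is, in effect, a summary of Dodson's own argument, and at the level of strategy it is accurate: local theory and stability in the critical $L^4_{t,x}$ norm, reduction via the Galilean-invariant linear and nonlinear profile decompositions to an almost periodic minimal-mass blowup solution, the dichotomy between the quasi-soliton ($\int N(t)^3\,dt=\infty$) and the frequency cascade, and the exclusion of both via the long-time Strichartz estimate coupled with a frequency-localized interaction Morawetz inequality; the scattering and $H^s$ persistence statements then follow from the global $L^4_{t,x}$ bound by standard Strichartz/fractional Leibniz iteration, exactly as you say. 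Be aware, though, that what you have written is an outline rather than a proof: the two ingredients you yourself flag as decisive (the long-time Strichartz estimate and the frequency-localized Morawetz estimate, run as a coupled bootstrap over characteristic intervals) are precisely the content of Dodson's paper and are not established by your sketch, and in $d=2$ both require genuinely new work compared to $d\geq 3$ --- the endpoint Strichartz failure forces heavier use of bilinear estimates adapted to the moving frequency center $\xi(t)$, and the two-dimensional interaction Morawetz carries a nonlocal weight that makes the frequency localization and error control more delicate than the schematic displays you wrote suggest. As a blind reconstruction of why the cited theorem is true, your answer is faithful; as a self-contained proof it is not, nor could it reasonably be expected to be.
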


For the mass-critical (NLS) we also have the following stability result. The norm $\|\cdotp \|_S$ here is again the Strichartz norm, which we will define later.

\begin{theorem}
[Stability for (NLS), \cite{TVZ}]
Let $I=[0,T]$ be a compact interval and let $v$ be an approximate solution to (NLS) in the sense that
$$(i \partial_t + \Delta )v = F(v) + e$$
for some function $e$, with initial data $v(0)=v_0$. Assume also that
\begin{align*}
    \|v\|_{L_t^\infty L_x^2(I\times \R^2)}&\leq M,\\
    \|v_0-u_0\|_{L^2(\R^2)}&\leq M',\\
    \|v\|_{L_{t,x}^4(I\times \R^2)}&\leq L,
\end{align*}
for some constants $M, M', L>0$, $u_0\in L^2(\R^2)$, and
\begin{align*}
    \|e^{it\Delta}(v_0-u_0)\|_{L_{t,x}^4(I\times \R^2)}&\leq \varepsilon,\\
    \|\int_0^t e^{i(t-s)\Delta}e(s)ds\|_{L_{t,x}^4(I\times \R^2)}&\leq \varepsilon
\end{align*}
for some $0<\varepsilon\leq \delta_0=\delta_0(M, M', L, \bar g)$. Then, there exists a solution $u$ to (NLS) on $I \times \R^2$ with initial data $u(0)=u_0$ satisfying
\begin{align*}
    \|u-v\|_{L_{t,x}^4(I\times \R^2)}&\leq C(M,M',L,\bar g)\varepsilon,\\
    \|u-v\|_{S(I\times \R^2)}&\leq C(M,M',L, \bar g)M',\\
    \|u\|_{S(I\times \R^2)}&\leq C(M,M',L,\bar g).
\end{align*}
\end{theorem}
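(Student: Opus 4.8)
The plan is to construct the genuine solution $u$ to (NLS) by a perturbative argument, controlling the difference $w:=u-v$ first on short time intervals where $v$ has small $L^4_{t,x}$ norm and then patching these estimates together over the finitely many such intervals that make up $I$.

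First I would fix a small absolute constant $\delta>0$, to be chosen at the end, and partition $I$ into $J=J(L)$ consecutive subintervals $I_j=[t_j,t_{j+1}]$ on each of which $\|v\|_{L^4_{t,x}(I_j\times\R^2)}\le\delta$; this is possible because $\|v\|_{L^4_{t,x}(I\times\R^2)}\le L$. On a fixed $I_j$, the difference $w$ solves $(i\partial_t+\Delta)w=F(v+w)-F(v)-e$ with data $w(t_j)=u(t_j)-v(t_j)$, so Duhamel's formula and the Strichartz inequalities give
\begin{equation*}
\|w\|_{S(I_j\times\R^2)}\lesssim_{\bar g}\|w(t_j)\|_{L^2}+\|F(v+w)-F(v)\|_{L^{4/3}_{t,x}(I_j\times\R^2)}+A_j,
\end{equation*}
where $A_j:=\big\|\int_{t_j}^te^{i(t-s)\Delta}e(s)\,ds\big\|_{S(I_j\times\R^2)}$, together with the companion bound obtained by replacing the left side with $\|w\|_{L^4_{t,x}(I_j)}$ and $\|w(t_j)\|_{L^2}$ with $\|e^{i(t-t_j)\Delta}w(t_j)\|_{L^4_{t,x}(I_j)}$. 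For the cubic nonlinearity the pointwise inequality $\abs{F(v+w)-F(v)}\lesssim_{\bar g}(\abs{v}^2+\abs{w}^2)\abs{w}$ and Hölder's inequality in $L^4_{t,x}$ yield
\begin{equation*}
\|F(v+w)-F(v)\|_{L^{4/3}_{t,x}(I_j)}\lesssim_{\bar g}\big(\|v\|_{L^4_{t,x}(I_j)}^2+\|w\|_{L^4_{t,x}(I_j)}^2\big)\|w\|_{L^4_{t,x}(I_j)}.
\end{equation*}
Since $\|v\|_{L^4_{t,x}(I_j)}\le\delta$ is small, a standard continuity (bootstrap) argument then produces a solution $u$ on $I_j$ and closes the two estimates in the form
\begin{equation*}
\|w\|_{S(I_j)}\lesssim_{\bar g}\|w(t_j)\|_{L^2}+A_j,\qquad\|w\|_{L^4_{t,x}(I_j)}\lesssim_{\bar g}\|e^{i(t-t_j)\Delta}w(t_j)\|_{L^4_{t,x}(I_j)}+A_j.
\end{equation*}

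The delicate point — and where I expect the real work to lie — is propagating smallness from one subinterval to the next. The $L^2$ data mismatch $\|w(t_j)\|_{L^2}$ is never small: only $\|w(0)\|_{L^2}\le M'$, and inductively it can be amplified by a fixed factor at each stage. What is controllably small, however, is the critical quantity $\|e^{i(t-t_j)\Delta}w(t_j)\|_{L^4_{t,x}(I_j)}$: expanding $w(t_j)$ through the Duhamel formula on $[0,t_j]$ and moving $e^{i(t-t_j)\Delta}$ inside, one bounds it by $\|e^{it\Delta}(v_0-u_0)\|_{L^4_{t,x}(I_j)}\le\varepsilon$ plus Duhamel contributions of the nonlinear difference and of $e$ over $I_0,\dots,I_{j-1}$, all already estimated in the previous step (the forcing pieces being assembled from the hypothesis $\|\int_0^te^{i(t-s)\Delta}e\,ds\|_{L^4_{t,x}(I)}\le\varepsilon$ via a Christ--Kiselev type argument to pass to subinterval $S$-norms). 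Because there are only $J=J(L)$ subintervals, an induction on $j=0,\dots,J-1$ — in which one checks at each step that the bootstrap hypothesis $\|w\|_{L^4_{t,x}(I_j)}\le\delta$ is not violated, which is what forces the threshold $\varepsilon\le\delta_0=\delta_0(M,M',L,\bar g)$ — yields $\|w\|_{L^4_{t,x}(I_j)}\le C(M,M',L,\bar g)\,\varepsilon$ at every stage. Summing the three displayed bounds over the $J$ subintervals then gives the asserted estimates for $\|u-v\|_{L^4_{t,x}(I)}$, $\|u-v\|_{S(I)}$ and $\|u\|_{S(I)}$. The principal obstacle is essentially this bookkeeping: since the $L^2$ mismatch does not decay along the iteration, each subinterval multiplies the constants by a fixed factor, so one must track that geometric growth carefully and confirm that $J$ depends only on $L$ (and $\bar g$) and that $\delta_0$, as well as the final constants, depend only on $M$, $M'$, $L$ and $\bar g$, exactly as claimed.
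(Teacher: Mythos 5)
The paper does not prove this statement directly --- it is cited to [TVZ] --- but it re-derives the analogous result for the inhomogeneous equation in Lemma 3.3 (short-time perturbations) and Theorem 3.4 (stability), and your proposal follows that same template: a bootstrap on intervals where the approximate solution has small $L^4_{t,x}$ norm, then an induction over the $J=J(L)$ such subintervals in which one propagates the smallness of $\|e^{i(t-t_j)\Delta}w(t_j)\|_{L^4_{t,x}(I_j)}$ while keeping the $L^2$ mismatch under control. In this sense the approach is essentially the same.

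Two details in your writeup should be corrected. First, the $L^2$ mismatch does not grow geometrically along the induction: by choosing $\varepsilon\le\varepsilon_1(M,M',J,\bar g)$ small enough one keeps $\|w(t_j)\|_{L^2}\le 2M'$ at every stage --- it is the per-interval constants $C(j)$, not the data mismatch, that compound (cf.\ Remark 3.5 in the paper). Second, and more substantively, the appeal to a Christ--Kiselev argument to control $A_j=\bigl\|\int_{t_j}^t e^{i(t-s)\Delta}e(s)\,ds\bigr\|_{S(I_j)}$ from the hypothesis $\bigl\|\int_0^t e^{i(t-s)\Delta}e(s)\,ds\bigr\|_{L^4_{t,x}(I)}\le\varepsilon$ does not work: Christ--Kiselev converts bounds on the untruncated Duhamel operator into bounds on the retarded one, which is not the transfer needed here, and in any case the hypothesis gives only $L^4_{t,x}$ control of the error's Duhamel contribution, not full $S$-norm control. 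The standard fix, which is what the paper's argument implicitly uses, is to avoid splitting the Duhamel integral at $t_j$ altogether: one writes, for $t\in I_j$,
$$w(t)=e^{it\Delta}w_0 - i\int_0^t e^{i(t-s)\Delta}\bigl[F(v+w)-F(v)\bigr](s)\,ds + i\int_0^t e^{i(t-s)\Delta}e(s)\,ds,$$
so that the error piece restricted to $I_j$ is exactly the hypothesized quantity, while the nonlinear piece splits into a contribution over $[0,t_j]$ (already controlled by the preceding steps of the induction) and one over $[t_j,t]$ (the current bootstrap quantity). With this modification your outline lines up with the paper's proofs of Lemma 3.3 and Theorem 3.4.
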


With the goal we explained above in mind, we will start by investigating well-posedness and establish the perturbation theory for ($NLS_n$) in Section 3. In Section 4 we will show that in our setup, solutions to (NLS) are indeed approximate solutions to ($NLS_n$) for $n$ sufficiently large. Thus for $n$ sufficiently large the global well-posedness of the homogeneous problem gives rise to a unique global solution for \eqref{eqn:NLS_n}.

In Section 5 we showcase some especially interesting applications of Theorem 1.1. In particular, we show that our Theorem guarantees the existence of unique solution for large $n$ and homogenization whenever the coupling function $g$ is a trigonometric polynomial, a continuous quasi-periodic function or a bounded (not necessarily continuous) periodic function. Moreover, we show that our result can be applied to certain alloy-type models. These problems model disordered alloys in which the atoms of the various materials are located in lattice positions.

\subsection{A further question}

In the case when $\bar g=0$, Theorem 1.1 implies that the effects of the nonlinearity average to zero as $n$ becomes large. One may wonder whether there is a way to rescale the coefficient so that we get nontrivial effects. For instance, is there a parameter $\alpha>0$ so that the solutions to the equation
\begin{align}
i\partial_t u_n+\Delta u_n= n^\alpha g(nx) |u_n|^2 u_n
\end{align}
homogenize, but not to the linear Schr\"odinger equation? 

The first question we should ask is whether we are even able to construct solutions. As we explained earlier, one of the challenges of investigating homogenization for ($NLS_n$) was the fact that we do not even know whether global solutions exist. Our remedy in that case was to use a perturbation argument. However, the fact that the coupling functions $n^\alpha g(nx)$ are no longer bounded uniformly in $n$ means that the techniques we used to prove Theorem 1.1 cannot be modified to provide an answer for equation (1.3). This leads one to doubt even that global solutions exist for $n$ large. Below we outline a candidate for precisely the contrary. This suggests that, indeed, it is not merely a failure of our method, but rather an intrinsic problem. 

We will utilize existing blow-up results for inhomogeneous mass-critical NLS to construct solutions with arbitrarily small initial data that blow up arbitrarily fast. Let $g$ be a periodic function satisfying the hypotheses of our Theorem 1.1 with $\bar g=0$ that also obeys the conditions of Theorem 1.1 in \cite{BCD}. Then there exist $T>0$ and $\phi\in H^1$ such that the solution $v$ to 
$$i\partial_t v+\Delta v= g(x) |v|^2 v$$
with initial data $\phi$ blows up at time $T$. We consider 
$$v_n(t,x)= n^{1-\frac{\alpha}{2}} v(n^2 t,n x).$$
One can easily see that for each $n$, $v_n$ gives us a solution to (1.3) with initial data $v_n(0,x)= n^{1-\frac{\alpha}{2}} \phi(n x)$. Note that 
$$\|v_n(0)\|_{L^2(\R^2)}=n^{-\frac{\alpha}{2}}\|\phi\|_{L^2(\R^2)}\to 0$$
and that $v_n$ blows up at time 
$$T_n=n^{-2}T\to 0.$$
We believe that we can use this sequence of initial data to obtain for every $\varepsilon>0$ solutions $v_n$ to (1.3) with initial data $v_0$, $\|v_0\|_{L^2(\R^2)}<\varepsilon$, that blow up within time $t_n<\varepsilon$ for $n$ sufficiently large. The way we would imagine to do that is by translating these initial data by multiples of the period of $g$, far enough from each other to minimize their interaction, and `gluing' them together:
$$v_0(x)=\sum_{j\geq 1} \frac{1}{j^2} v_{n_j}(0, x-a_j) $$
for an appropriate subsequence $\{v_{n_j}(0)\}$ and translations $a_j\in L \mathbb Z^2$, where $L$ is the period of $g$.
This presents a compelling argument for the failure of the existence of solutions.

If we look at more regular initial data, maybe homogenization takes place for some $\alpha>0$. An attempt to modify the methods in the proof of Theorem 1.1 to attack this problem would require a suitable stability result. Unfortunately, the constants in such a stability result would have to depend on $n$ due to the lack of a uniform bound of the coupling functions, as we discussed above. As a result, even if we succeed in controlling the error (a quantity similar to (\ref{error}) in page \pageref{error}), we are unable to conclude that homogenization does or does not happen. For example, although it is not difficult to verify that 
$$\|\int_0^t e^{i(t-s)\Delta}n^\alpha g(nx) F(u (s))ds\|_{L^4_{t,x}(\R\times\R^2)}\lesssim n^{\alpha- \frac{2s}{s+2}}$$
where $u(t)=e^{it\Delta}u_0$ for initial data $u_0\in H^s(\R^2)$, i.e. the error up to which $u$ solves equation (1.3) with the same initial data converges to zero as $n\to\infty$ for $0<\alpha<2$ and $u_0$ sufficiently smooth depending on $\alpha$, we cannot proceed much further. 

We also observe the following interesting fact: If we consider a periodic coupling function $g$ satisfying the hypotheses of Theorem 1.1 for which solutions to equation (1.3) homogenize to (NLS) with coupling constant $\lambda$, it is possible to prove that the same holds (for the same constant $\lambda$) for any translation of $g$ by a rational multiple of its period. In particular, this implies that if (1.3) with $g(x)=\cos(x)$ homogenizes to (NLS) with coupling constant $\lambda$, so does (1.3) with $g(x)=-\cos(x)$. Since one would expect the latter to homogenize to (NLS) with coupling constant $-\lambda$, this fact leads us to believe that it is not likely that (1.3) homogenizes to (NLS) with $\lambda\neq 0$ for any $\alpha>0$. Hence, for more regular data maybe
(1.3) homogenizes to some more complicated equation, but not to (NLS).

\section*{Acknowledgements} I would like to thank my advisors, Rowan Killip and Monica Visan, for introducing me to this problem and for their invaluable support and guidance. I am also grateful to the anonymous referees for their careful reading of the manuscript and their
thoughtful comments. This work was supported in part by NSF grants DMS-1600942 (Rowan Killip) and DMS-1500707 (Monica Visan).

\section{Preliminaries}

We adopt the following convention for the Fourier transform:
$$\hat f(\xi)=\frac{1}{2\pi} \int_{\R^2} e^{-i \xi x} f(x)dx$$
for functions on the plane and
$$\hat f(n)=\frac{1}{2\pi} \int_{[0,2\pi)^2} e^{-i n x} f(x)dx$$
for functions on the torus $\R^2 /(2\pi \mathbb Z)^2$.

Throughout this paper we will denote the nonlinearities associated with $(NLS_n)$ and (NLS) by
$$F_n(u):=g (nx) \abs{u}^2 u \quad \text{and}\quad F(u):=\abs{u}^2 u.$$

We use the standard Littlewood-Paley operators $P_{\leq N}$, $P_{>N}$, given by
\begin{align*}
    \widehat{ P_{\leq N}f}(\xi)&:= m(\frac{\xi}{N}) \hat f(\xi),\\
    \widehat{ P_{> N}f}(\xi)&:= \big(1-m(\frac{\xi}{N})\big) \hat f(\xi)
\end{align*}
for $N\in 2^\mathbb Z$, where $m\in C_c^\infty(\R^2)$ is a radial bump function supported in the ball $\{\xi\in \R^2: |\xi| \leq 2\}$ and equal to 1 on the ball $\{\xi\in \R^2: |\xi| \leq 1\}$.
We will often refer to $P_{\leq N}$ and $P_{>N}$ as Littlewood-Paley projections onto low frequencies or high frequencies respectively (although they are not really projections). Like all Fourier multipliers, the Littlewood-Paley operators commute with the propagator $e^{it\Delta}$, as well as with differential operators. They also obey the following estimates.

\begin{lemma}
[Bernstein estimates]\cite{MS}
For $1\leq p\leq q\leq \infty$ and $s>0$,
\begin{align*}
    \|P_{N}f\|_{L^p}+\|P_{\leq N}f\|_{L^p}&\lesssim \|f\|_{L^p},\\
    \||\nabla|^{s} P_{\leq N}f\|_{L^p}&\lesssim N^s \|P_{\leq N}f\|_{L^p},\\
    \|P_{>N}f\|_{L^p}&\lesssim N^{-s} \||\nabla|^{s} P_{> N}f\|_{L^p},\\
    \|P_{\leq N}f\|_{L^q}&\lesssim N^{\frac 2 p -\frac 2 q} \|P_{\leq N}f\|_{L^p}.
\end{align*}

\end{lemma}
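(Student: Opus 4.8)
The plan is to reduce each of the four inequalities to Young's convolution inequality on $\R^2$ together with the elementary scaling identity $\|N^2 K(N\,\cdot)\|_{L^r(\R^2)} = N^{2(1-1/r)}\|K\|_{L^r(\R^2)}$, after observing that every operator appearing is convolution against a suitably rescaled fixed kernel. Fix a Schwartz bump $\tilde m$ with $\tilde m \equiv 1$ on $\supp m$, so that the associated operator $\tilde P_{\le N}$ satisfies $\tilde P_{\le N} P_{\le N} = P_{\le N}$, and likewise fix $\tilde P_{>N}$ with $\tilde P_{>N} P_{>N} = P_{>N}$. Each of $P_{\le N}$, $P_N$, $\tilde P_{\le N}$ is convolution against $N^2 K(N\,\cdot)$ for an appropriate fixed Schwartz function $K$ (respectively the inverse Fourier transform of $m$, of a bump supported in an annulus, and of $\tilde m$, up to normalizing constants). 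The first line of the lemma is then immediate from Young's inequality with $r=1$, since the $L^1$ norm of the kernel does not depend on $N$; and the fourth line follows by applying Young's inequality to $P_{\le N}f = (\text{kernel of }\tilde P_{\le N}) * (P_{\le N}f)$ with exponent $\tfrac1r = 1 + \tfrac1q - \tfrac1p$, an admissible choice ($1\le r\le\infty$) precisely because $p\le q$, the scaling identity then producing the gain $N^{2(1/p-1/q)} = N^{2/p-2/q}$.

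For the two derivative estimates I would write $|\nabla|^s P_{\le N} = \big(|\nabla|^s\tilde P_{\le N}\big)P_{\le N}$ and $P_{>N} = \big(|\nabla|^{-s}\tilde P_{>N}\big)\big(|\nabla|^s P_{>N}\big)$, and note that $|\nabla|^s\tilde P_{\le N}$ has Fourier symbol $|\xi|^s\tilde m(\xi/N) = N^s\phi(\xi/N)$ with $\phi(\xi) := |\xi|^s\tilde m(\xi)$, while $|\nabla|^{-s}\tilde P_{>N}$ has symbol $|\xi|^{-s}\chi(\xi/N) = N^{-s}\rho(\xi/N)$, where $\chi$ is the symbol of $\tilde P_{>1}$ (smooth, vanishing near the origin, and equal to $1$ on the support of $1-m$) and $\rho(\xi) := |\xi|^{-s}\chi(\xi)$. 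Consequently $|\nabla|^s P_{\le N}f = N^s\big(N^2\phi^\vee(N\,\cdot)\big) * (P_{\le N}f)$ and $P_{>N}f = N^{-s}\big(N^2\rho^\vee(N\,\cdot)\big) * \big(|\nabla|^s P_{>N}f\big)$, so Young's inequality with $r=1$ and the scaling identity close both estimates, provided $\phi^\vee,\rho^\vee\in L^1(\R^2)$.

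Thus the only substantive point --- and the step I expect to be the crux --- is that these two fixed multipliers have integrable inverse transforms. Here $\phi$ is compactly supported, continuous, and smooth away from the origin with $|\partial^\alpha\phi(\xi)|\lesssim|\xi|^{s-|\alpha|}$; isolating the $|\xi|^s$-type singularity at the origin shows $\phi^\vee$ to be bounded and to decay like $|x|^{-2-s}$ at spatial infinity, hence $\phi^\vee\in L^1(\R^2)$ exactly because $s>0$. Dually, $\rho$ is a smooth symbol of order $-s$, so $\rho^\vee$ is rapidly decreasing at spatial infinity and has at worst an $|x|^{s-2}$ singularity at the origin, locally integrable on $\R^2$ since $s>0$. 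Both facts are entirely standard in Fourier multiplier theory and could alternatively be quoted from \cite{MS}; the reduction above merely makes transparent where the hypotheses $s>0$ and $p\le q$ enter and why the powers of $N$ are as stated.
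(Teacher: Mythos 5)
The paper states this lemma with a citation to \cite{MS} and provides no proof of its own, so there is no in-paper argument to compare against; I will assess your proof on its own terms. Your reduction of all four estimates to Young's convolution inequality plus the scaling identity is correct and is the standard textbook route, and you have correctly located the one nontrivial point: integrability on $\R^2$ of the two fixed kernels $(|\xi|^s\tilde m(\xi))^\vee$ and $(|\xi|^{-s}\chi(\xi))^\vee$. Two small remarks. First, the $|x|^{-2-s}$ decay of $(|\xi|^s\tilde m)^\vee$ genuinely needs the dyadic-annulus argument (or its equivalent) rather than a single integration by parts: for $0<s\le 1$ the naive bound $|x^\alpha\phi^\vee(x)|\le\|\partial^\alpha\phi\|_{L^1}$ only allows $|\alpha|=2$ and yields $|x|^{-2}$, which just misses $L^1(\R^2)$. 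You flag this as a standard fact citable from \cite{MS}, which is fair, but it is worth knowing where the delicacy lies. Second, be a little careful with the phrase "symbol of order $-s$" for $\rho=|\xi|^{-s}\chi$: since $\chi\equiv 1$ on the support of $1-m$ it must equal $1$ for all large $|\xi|$, so $\rho$ is not compactly supported; the correct statement, which you in fact use, is that $\rho$ is smooth with $|\partial^\alpha\rho(\xi)|\lesssim\langle\xi\rangle^{-s-|\alpha|}$, giving $\rho^\vee$ rapid decay away from the origin by repeated integration by parts together with a locally integrable $|x|^{s-2}$ singularity at the origin (present only when $0<s<2$; for $s\ge 2$ the kernel is continuous). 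With those clarifications the argument is complete and correct.
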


\begin{definition}
We say that the pair $(q, r)$ is Schr\"odinger admissible if $2\leq q, r \leq \infty$, $\frac {1}{q}+\frac{1}{r}=\frac {1}{2}$ and $(q, r)\neq (2, \infty)$. For a fixed spacetime slab $I\times \R^2$ we define the Strichartz norm
$$\|u\|_{S(I\times \R^2)}:=\sup_{(q,r) \,\text{admissible}}\|u\|_{L_t^q L_x^r (I\times \R^2)}.$$
We write $S(I\times \R^2)$ for the closure of all test functions under this norm.
\end{definition}

\begin{definition}[Solution]
Let $I$ be a compact interval containing zero. A function $u:I\times \R^2\to \mathbb C$ is a (strong) solution to ($NLS_n$) if it belongs to $C_t L^2_x \cap L_{t, \text{loc}}^4 L_x^4$ and obeys the Duhamel formula
$$u(t)= e^{it\Delta}u_0 -i \int_0^t e^{i(t-s)\Delta}F_n(u(s))ds$$
for all $t\in I$.
\end{definition}

\begin{lemma}
[Strichartz estimates] \cite{GV, Strichartz}
Let $(q, r)$ and $(\tilde q, \tilde r) $ be any Schr\"odinger admissible pairs. If $u$ solves 
\begin{equation}
    \begin{cases}
    i u_t +\Delta u=F\notag\\
    u(0)=u_0
    \end{cases}
\end{equation}
on $I\times \R^2$ for some time interval $I\ni 0$, then
$$\|u\|_{L_t^q L_x^r (I\times \R^2)}\lesssim \|u_0\|_{L_x^2(\R^2)}+\|F\|_{L_t^{\tilde q'} L_x^{\tilde r'} (I\times \R^2)},$$
where $\tilde q'$, $\tilde r'$ are the conjugate exponents of $\tilde q$, $\tilde r$ respectively ($\frac {1}{\tilde q}+\frac {1}{\tilde q'}=1=\frac {1}{\tilde r}+\frac {1}{\tilde r'}$).
\end{lemma}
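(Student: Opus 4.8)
I would give the classical $TT^{*}$ argument of Strichartz and Ginibre--Velo (in the streamlined form of Keel--Tao), which in dimension $d=2$ is particularly clean because the exceptional pair $(2,\infty)$ has been removed from the definition of admissibility, so that every admissible pair has $2<q\le\infty$ and $2\le r<\infty$ and no delicate endpoint interpolation is required. The starting point is the Duhamel representation
$$u(t)=e^{it\Delta}u_0-i\int_0^t e^{i(t-s)\Delta}F(s)\,ds,$$
so it suffices to bound each term of the right-hand side separately in $L_t^q L_x^r(I\times\R^2)$.

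First I would record the two basic bounds for the free evolution: mass conservation $\|e^{it\Delta}f\|_{L_x^2(\R^2)}=\|f\|_{L_x^2(\R^2)}$, immediate from Plancherel since $e^{it\Delta}$ is a unimodular Fourier multiplier, and the dispersive estimate $\|e^{it\Delta}f\|_{L_x^\infty(\R^2)}\lesssim |t|^{-1}\|f\|_{L_x^1(\R^2)}$, obtained from the explicit convolution kernel $(4\pi i t)^{-1}e^{i|x|^2/4t}$. Interpolating between these gives $\|e^{it\Delta}f\|_{L_x^r}\lesssim |t|^{-(1-\frac2r)}\|f\|_{L_x^{r'}}$ for all $2\le r\le\infty$, and the admissibility relation $\frac1q+\frac1r=\frac12$ is exactly what makes $1-\frac2r=\frac2q$.

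Next, the homogeneous estimate $\|e^{it\Delta}u_0\|_{L_t^qL_x^r(\R\times\R^2)}\lesssim\|u_0\|_{L_x^2}$. For $(q,r)=(\infty,2)$ this is mass conservation; for $q<\infty$ (equivalently $q>2$ in $d=2$) I use $TT^{*}$: it is equivalent to bound $G\mapsto\int_\R e^{i(t-s)\Delta}G(s)\,ds$ from $L_t^{q'}L_x^{r'}$ to $L_t^qL_x^r$, and applying the interpolated dispersive bound pointwise in $t-s$, followed by Minkowski's inequality and the one-dimensional Hardy--Littlewood--Sobolev inequality with kernel $|t-s|^{-2/q}$ — permissible precisely because $0<\frac2q<1$ when $2<q<\infty$ — yields the claim. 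By duality this is the same as $\|\int_\R e^{-is\Delta}G(s)\,ds\|_{L_x^2}\lesssim\|G\|_{L_t^{q'}L_x^{r'}}$, and composing the two halves of this $TT^{*}$ pair for two possibly distinct admissible pairs $(q,r)$ and $(\tilde q,\tilde r)$ gives the non-retarded inhomogeneous bound
$$\Big\|\int_\R e^{i(t-s)\Delta}F(s)\,ds\Big\|_{L_t^qL_x^r(\R\times\R^2)}\lesssim\|F\|_{L_t^{\tilde q'}L_x^{\tilde r'}(\R\times\R^2)}.$$

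Finally, to replace the full integral by the retarded one $\int_0^t$ occurring in Duhamel's formula, I would invoke the Christ--Kiselev lemma, which upgrades boundedness of the untruncated operator to the truncated one whenever the output time-exponent strictly exceeds the input one; here $q>2$ while $\tilde q'\in[1,2)$, so $q>\tilde q'$ always holds once the endpoint is excluded. Restricting to $I$ and combining with the homogeneous estimate then yields the stated inequality with an absolute implicit constant depending only on the exponents. The one place demanding attention is this last step: one must confirm that the strict inequality $q>\tilde q'$ required by Christ--Kiselev is genuinely available for all admissible pairs in $d=2$ (it is, since $q\in(2,\infty]$ and $\tilde q'\in[1,2)$), so that no diagonal case is left over. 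In practice the paper only ever applies these estimates through the pairs generated by $(4,4)$ and $(\infty,2)$, for which even a direct verification suffices, so this is a formality rather than a real obstacle.
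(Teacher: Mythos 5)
The paper offers no proof of this lemma at all---it is quoted as a standard result from the cited references of Strichartz and Ginibre--Velo---and your argument is exactly the classical proof found there (dispersive estimate plus $TT^{*}$, one-dimensional Hardy--Littlewood--Sobolev off the endpoint, Christ--Kiselev for the retarded non-diagonal case). It is correct, including the key observation that excluding the pair $(2,\infty)$ in $d=2$ forces $\tilde q'<2<q\le\infty$, so the strict inequality needed for Christ--Kiselev is always available.
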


\section {well-posedness and stability}

We start by showing well-posedness for ($NLS_n$).

\begin{theorem} [Well-posedness] 
Let $u_0\in L^2 (\R^2)$. There exists $\eta_0=\eta_0(\|g\|_{L^\infty(\R^2)})>0$ such that, for $0<\eta <\eta_0$ and $I$ compact interval containing zero satisfying 
\begin{equation}\label{eqn:a}
    \|e^{it\Delta}u_0\|_{L_{t,x}^4 (I\times \R^2)}\leq\eta,
\end{equation}
there exists a unique solution $u_n$ to ($NLS_n$) on $I\times \R^2$ for every $n$. Moreover, we have the following bounds

\begin{equation}
    \|u_n\|_{L_{t,x}^4 (I\times \R^2)}\leq 2\eta,
\end{equation}

\begin{equation}
    \|u_n\|_{S (I\times \R^2)}\lesssim \|u_0\|_{L_x^2(\R^2)}.
\end{equation}
\end{theorem}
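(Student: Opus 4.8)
The plan is to run a standard small-data fixed-point argument in Strichartz spaces, keeping careful track of the fact that every constant depends on $g$ only through $\|g\|_{L^\infty(\R^2)}$ and is therefore uniform in $n$. Concretely, I would define the solution map
$$\Phi(u)(t) := e^{it\Delta}u_0 - i\int_0^t e^{i(t-s)\Delta} F_n(u(s))\,ds$$
and look for a fixed point in the complete metric space
$$B := \bigl\{u \in L^4_{t,x}(I\times\R^2) : \|u\|_{L^4_{t,x}(I\times\R^2)} \leq 2\eta\bigr\}, \qquad d(u,v) := \|u-v\|_{L^4_{t,x}(I\times\R^2)}.$$

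First I would record the relevant estimates. The pair $(4,4)$ is Schr\"odinger admissible on $\R^2$ (since $\tfrac14+\tfrac14=\tfrac12$) with dual exponent $(\tfrac43,\tfrac43)$, so the Strichartz estimate gives $\|\Phi(u)\|_{L^4_{t,x}} \lesssim \|e^{it\Delta}u_0\|_{L^4_{t,x}} + \|F_n(u)\|_{L^{4/3}_{t,x}}$ and, more generally, controls the full norm $\|\Phi(u)\|_{S(I\times\R^2)}$ by the same right-hand side. The key nonlinear input is the pointwise bound $|F_n(u)| = |g(nx)|\,|u|^3 \leq \|g\|_{L^\infty(\R^2)}\,|u|^3$ together with $|F_n(u)-F_n(v)| \lesssim \|g\|_{L^\infty}\bigl(|u|^2+|v|^2\bigr)|u-v|$; applying H\"older in spacetime (with $\tfrac34 = \tfrac14+\tfrac14+\tfrac14$) these yield
$$\|F_n(u)\|_{L^{4/3}_{t,x}} \leq \|g\|_{L^\infty}\|u\|_{L^4_{t,x}}^3, \qquad \|F_n(u)-F_n(v)\|_{L^{4/3}_{t,x}} \lesssim \|g\|_{L^\infty}\bigl(\|u\|_{L^4_{t,x}}^2+\|v\|_{L^4_{t,x}}^2\bigr)\|u-v\|_{L^4_{t,x}},$$
uniformly in $n$.

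Next I would run the contraction. For $u,v \in B$, using the hypothesis \eqref{eqn:a},
$$\|\Phi(u)\|_{L^4_{t,x}} \leq \eta + C\|g\|_{L^\infty}(2\eta)^3 \leq 2\eta, \qquad d(\Phi(u),\Phi(v)) \leq C\|g\|_{L^\infty}(2\eta)^2\, d(u,v) \leq \tfrac12\, d(u,v),$$
provided $\eta < \eta_0(\|g\|_{L^\infty(\R^2)})$ is chosen small enough. Thus $\Phi$ is a contraction on $B$ and has a unique fixed point $u_n$, which by construction obeys $\|u_n\|_{L^4_{t,x}(I\times\R^2)}\leq 2\eta$. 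To upgrade this to the claimed Strichartz bound I would feed $u_n$ back into its own Duhamel identity and use Strichartz once more,
$$\|u_n\|_{S(I\times\R^2)} \lesssim \|u_0\|_{L^2} + \|g\|_{L^\infty}\|u_n\|_{L^4_{t,x}}^2 \|u_n\|_{S(I\times\R^2)} \lesssim \|u_0\|_{L^2} + C\|g\|_{L^\infty}\eta^2\|u_n\|_{S(I\times\R^2)},$$
and absorb the last term, using $\eta < \eta_0$; this also shows $u_n \in C_tL^2_x$, so $u_n$ is a solution in the sense of the definition in Section~2.

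Finally, uniqueness holds not just inside $B$ but in the full class $C_tL^2_x\cap L^4_{t,x}(I\times\R^2)$: given two such solutions $u,v$ on $I$, I would partition $I$ into finitely many subintervals on each of which $\|u\|_{L^4_{t,x}}$ and $\|v\|_{L^4_{t,x}}$ are below a threshold depending only on $\|g\|_{L^\infty(\R^2)}$; the difference estimate above forces $u=v$ on the first subinterval, and one iterates. I do not expect any genuine obstacle here — the argument is entirely classical — the only thing to be careful about is this globalization of uniqueness and the bookkeeping ensuring all constants, in particular $\eta_0$, depend on $g$ solely through $\|g\|_{L^\infty(\R^2)}$, which is automatic since $\|g(n\,\cdot)\|_{L^\infty} = \|g\|_{L^\infty}$ for every $n$. (That intervals $I$ satisfying \eqref{eqn:a} exist at all follows from the global Strichartz bound $\|e^{it\Delta}u_0\|_{L^4_{t,x}(\R\times\R^2)} \lesssim \|u_0\|_{L^2}$ and absolute continuity of the integral, although the statement takes such an $I$ as given.)
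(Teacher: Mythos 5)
Your proof is correct and takes essentially the same route as the paper's: a contraction mapping argument for the Duhamel map on a small $L^4_{t,x}$-ball, using the $(4,4)$ Strichartz estimate and the pointwise bound $|F_n(u)|\le\|g\|_{L^\infty}|u|^3$ to get $n$-independence, followed by a bootstrap in the Strichartz norm. The only (cosmetic) differences are that the paper runs the fixed point on the intersection $\B_1\cap\B_2$ of an $L^\infty_tL^2_x$-ball and the $L^4_{t,x}$-ball rather than on the $L^4_{t,x}$-ball alone, and that you additionally spell out the upgrade to uniqueness in the full class $C_tL^2_x\cap L^4_{t,x}$ by partitioning $I$ --- a point the paper's proof leaves implicit.
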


\begin{proof} 
We use a contraction mapping argument.
We consider the solution map $u\mapsto \Phi_n(u)$
given by Duhamel's formula
$$\Phi_n(u)= e^{it\Delta}u_0 -i \int_0^t e^{i(t-s)\Delta}F_n(u(s))ds.$$
We will show that this is a contraction mapping on the set $\B=\B_1\cap \B_2$, where
$$\B_1= \{u\in L_t^\infty L_x^2(I\times \R^2) : \|u\|_{L_t^\infty L_x^2(I\times \R^2)} \leq 2\|u_0\|_{L^2(\R^2)}+C \|g\|_{L^\infty(\R^2)} (2\eta)^3\},$$
$$\B_2= \{u\in L_{t,x}^4(I\times \R^2) : \|u\|_{L_{t,x}^4(I\times \R^2)} \leq 2\eta\},$$
under the metric 
$$d(u,v)=\|u-v\|_{L_{t,x}^4(I\times \R^2)}.$$
Note that $C$ is the constant in Strichartz inequality. Also note that $\B_1, \B_2$ are closed, hence complete under $d$.

For $u\in \B$, Strichartz inequality yields
\begin{equation}
    \begin{split}
        \|\Phi_n(u)\|_{L_t^\infty L_x^2(I\times \R^2)}  &\leq \|u_0\|_{L^2(\R^2)}+ C\|F_n(u)\|_{L_{t,x}^\frac 4 3(I\times \R^2)}\notag\\
        &\leq \|u_0\|_{L^2(\R^2)}+ C \|g\|_{L^\infty(\R^2)} \|u\|_{L_{t,x}^4(I\times \R^2)}^3\\
        &\leq  2\|u_0\|_{L^2(\R^2)}+C \|g\|_{L^\infty(\R^2)} (2\eta)^3,
    \end{split}
\end{equation}
hence $\Phi_n(u)\in \B_1$. 

On the other hand, using Strichartz inequality and (3.1),
$$\|\Phi_n(u)\|_{L_{t,x}^4(I\times \R^2)} \leq \eta +C \|g\|_{L^\infty(\R^2)} \|u\|_{L_{t,x}^4(I\times \R^2)}^3 \leq (1+ 2C\|g\|_{L^\infty(\R^2)} (2\eta)^2)\eta \leq 2 \eta,$$
for $\eta>0$ sufficiently small, depending on $\|g\|_{L^\infty(\R^2)}$.

Therefore, if we choose $\eta_0$ sufficiently small so that $4\eta_0^2 C \|g\|_{L^\infty(\R^2)}<\frac 1 2$, $\Phi_n$ maps $\B$ to itself. Now we begin to show that it is indeed a contraction mapping.

First of all, note that for $u,v \in\B$
$$\abs{F_n(u)-F_n(v)}\leq \|g\|_{L^\infty(\R^2)}\abs{\abs{u}^2 u-\abs{v}^2v} \leq 2 \|g\|_{L^\infty(\R^2)} \abs{u-v} (\abs{u}^2 +\abs{v}^2).$$

Thus
\begin{equation}
    \begin{split}
        \|\Phi_n(u)-\Phi_n(v)&\|_{L_{t,x}^4(I\times \R^2)} \leq C \|F_n(u)-F_n(v)\|_{L_{t,x}^\frac 4 3(I\times \R^2)}\notag\\
        &\leq 2C \|g\|_{L^\infty(\R^2)} (\|u\|^2_{L_{t,x}^4(I\times \R^2)}+\|v\|^2_{L_{t,x}^4(I\times \R^2)})\|u-v\|_{L_{t,x}^4(I\times \R^2)}\\
        &\leq 4C \|g\|_{L^\infty(\R^2)} (2\eta)^2 \|u-v\|_{L_{t,x}^4(I\times \R^2)}.
    \end{split}
\end{equation}
By choosing $\eta_0$ even smaller if necessary, depending on $\|g\|_{L^\infty(\R^2)}$, we conclude that $\Phi_n$ is a contraction mapping. The fixed point theorem then guarantees the existence of a unique solution $u_n$ to ($NLS_n$). One more application of the  Strichartz inequality yields (3.3):
\begin{align*}
    \|u_n\|_{S(I\times \R^2)}&\leq C\|u_0\|_{L^2(\R^2)}+C\|g\|_{L^\infty(\R^2)}\|u_n\|_{L_{t,x}^4(I\times \R^2)}^2 \|u_n\|_{S(I\times \R^2)}\\
    &\leq C\|u_0\|_{L^2(\R^2)}+C\|g\|_{L^\infty(\R^2)}4 \eta^2 \|u_n\|_{S(I\times \R^2)}\\
    &\leq C\|u_0\|_{L^2(\R^2)}+\frac 1 2\|u_n\|_{S(I\times \R^2)},
\end{align*}
so
$$\|u_n\|_{S(I\times \R^2)}\leq 2 C\|u_0\|_{L^2(\R^2)}.$$
\end{proof}

\begin{remark}
Note that the constant $\eta_0$ does not depend on $n$, only on $\|g\|_{L^\infty(\R^2)}$. Therefore, if we are able to find a time interval $I$ satisfying the conditions of the theorem, we get local well-posedness for ($NLS_n$) for all $n$. Once again, $I$ will not depend on $n$, only on $u_0$ and $\|g\|_{L^\infty(\R^2)}$.

For small $L^2$-initial data, global well-posedness follows from combining Theorem 3.1 with the Strichartz inequality 
$$ \|e^{it\Delta}u_0\|_{L_{t,x}^4 (\R\times \R^2)}\lesssim \|u_0\|_{L^2(\R^2)}.$$

For general $L^2$-initial data, we are still able to prove the existence of a compact time interval $I$ such that \eqref{eqn:a} holds. Strichartz inequality allows us to apply the Dominated Convergence Theorem and show that 
$$\lim_{T\to0}\|e^{it\Delta}u_0\|_{L_{t,x}^4 ([-T, T]\times \R^2)}=0.$$
\end{remark}

Our next goal is to develop a stability result for the equation ($NLS_n$). In the next section, this will be used to compare solutions to ($NLS_n$) to solutions to the cubic NLS with a constant coupling constant; these solutions are known to be global and satisfy global spacetime bounds (Theorem 1.4 ).

The stability result adapted to ($NLS_n$) is modeled after the one for the mass-critical equation (Theorem 1.5). We present the details below.

\begin{lemma}[Short time perturbations] Fix $g\in L^\infty(\R^2)$. Let $I=[0,T]$ be a compact time interval and let $\tilde u$ be an approximate solution to \eqref{eqn:NLS_n} in the sense that 
$$i \tilde u_t +\Delta \tilde u=F_n(\tilde u)+e$$
for some function $e$, with initial data $\tilde u(0)=\tilde u_0\in L^2(\R^2)$. Assume also that
\begin{equation}
    \|\tilde u\|_{L_t^\infty L_x^2(I\times \R^2)}\leq M,
\end{equation}
\begin{equation}
    \|u_0-\tilde u_0\|_{L^2(\R^2)}\leq M'
\end{equation}
for some $M, M'>0$, $u_0\in L^2(\R^2)$, and
\begin{equation}
    \|\tilde u\|_{L_{t,x}^4(I\times \R^2)}\leq \varepsilon_0,
\end{equation}
\begin{equation}
    \|e^{it\Delta}(u_0-\tilde u_0)\|_{L_{t,x}^4(I\times \R^2)}\leq \varepsilon,
\end{equation}
\begin{equation}
    \|\int_0^t e^{i(t-s)\Delta}e(s) ds\|_{L_{t,x}^4 (I\times \R^2)}\leq \varepsilon
\end{equation}
for some $0<\varepsilon\leq \varepsilon_0=\varepsilon_0(M,M', \|g\|_{L^\infty(\R^2)})$ small. Then there exists a unique solution $u_n$ to \eqref{eqn:NLS_n} with initial data $u_n(0)=u_0$ satisfying
$$\|u_n-\tilde u \|_{L_{t,x}^4(I\times \R^2)}\lesssim \varepsilon,$$
$$\|u_n-\tilde u \|_{S(I\times \R^2)}\lesssim M',$$
$$\|u_n\|_{S(I\times \R^2)}\lesssim M+M',$$
$$\|F_n(u_n)-F_n(\tilde u)\|_{L_{t,x}^\frac4 3(I\times \R^2)}\lesssim\varepsilon,$$
where all implicit constants are allowed to depend on $\|g\|_{L^\infty(\R^2)}$.

\end{lemma}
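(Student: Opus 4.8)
# Proof Proposal

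The plan is to run a bootstrap (continuity) argument on the difference $w := u_n - \tilde u$, mimicking the standard short-time stability proof for the mass-critical NLS (Theorem 1.5), but tracking the nonlinearity $F_n$ in place of $F$. Since $\|g\|_{L^\infty}$ is fixed, every constant arising from pointwise bounds like $|F_n(u)-F_n(\tilde u)|\lesssim \|g\|_{L^\infty}|w|(|u|^2+|\tilde u|^2)$ is harmless and folded into the implicit constants. First I would set up the integral equation for $w$: subtracting the Duhamel formula for $u_n$ (whose existence on $I$ for small enough $\varepsilon_0$ is guaranteed by Theorem 3.1, since $\|e^{it\Delta}u_0\|_{L^4_{t,x}}\leq \|e^{it\Delta}\tilde u_0\|_{L^4_{t,x}} + \varepsilon \lesssim \varepsilon_0$ by Strichartz applied to $\tilde u$ and the hypotheses) from the equation satisfied by $\tilde u$, one gets
$$
w(t) = e^{it\Delta}(u_0-\tilde u_0) - i\int_0^t e^{i(t-s)\Delta}\bigl(F_n(u_n)-F_n(\tilde u)\bigr)(s)\,ds + i\int_0^t e^{i(t-s)\Delta} e(s)\,ds.
$$

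Next I would estimate $\|w\|_{L^4_{t,x}(I\times\R^2)}$ using the Strichartz estimate (Lemma 2.7). The free term contributes $\varepsilon$ by \eqref{eqn:a}-type hypothesis, the error term contributes $\varepsilon$ by the last hypothesis, and the nonlinear term is controlled by $\|F_n(u_n)-F_n(\tilde u)\|_{L^{4/3}_{t,x}} \lesssim \|g\|_{L^\infty}\bigl(\|w\|_{L^4_{t,x}}^2 + \|\tilde u\|_{L^4_{t,x}}^2\bigr)\|w\|_{L^4_{t,x}} \lesssim \bigl(\|w\|_{L^4_{t,x}}^2 + \varepsilon_0^2\bigr)\|w\|_{L^4_{t,x}}$. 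Writing $A := \|w\|_{L^4_{t,x}(I\times\R^2)}$, this yields a closed inequality $A \lesssim \varepsilon + (A^2 + \varepsilon_0^2)A$. A standard continuity/bootstrap argument (the interval $I$ is a single short-time block where $\|\tilde u\|_{L^4_{t,x}}\leq\varepsilon_0$, so no subdivision is needed) then forces $A \lesssim \varepsilon$ once $\varepsilon_0$ is chosen small enough depending on $M,M',\|g\|_{L^\infty}$. From this, $\|F_n(u_n)-F_n(\tilde u)\|_{L^{4/3}_{t,x}}\lesssim\varepsilon$ follows immediately, giving the fourth claimed bound.

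For the remaining two bounds I would put the full Strichartz norm on $w$: $\|w\|_{S(I\times\R^2)}\lesssim \|u_0-\tilde u_0\|_{L^2} + \|F_n(u_n)-F_n(\tilde u)\|_{L^{4/3}_{t,x}} \lesssim M' + \varepsilon \lesssim M'$, using $\varepsilon \leq \varepsilon_0 \leq M'$ (or absorbing $\varepsilon$ since it is small). Then $\|u_n\|_{S} \leq \|\tilde u\|_{S} + \|w\|_{S}$; to bound $\|\tilde u\|_S$ I would apply Strichartz once more to the equation for $\tilde u$, getting $\|\tilde u\|_{S}\lesssim \|\tilde u_0\|_{L^2} + \|F_n(\tilde u)\|_{L^{4/3}_{t,x}} + \|(i\partial_t+\Delta)^{-1}e\|\lesssim M + \|g\|_{L^\infty}\varepsilon_0^2\|\tilde u\|_S + \varepsilon$, which after absorbing the nonlinear term (small $\varepsilon_0$) gives $\|\tilde u\|_{S}\lesssim M + M'$ (the error-Duhamel term is bounded in $L^4_{t,x}$ by $\varepsilon$, and one upgrades it to $S$ by interpolation/Strichartz, or simply notes it is $\lesssim M'$); hence $\|u_n\|_S \lesssim M + M'$. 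Uniqueness on $I$ is already contained in Theorem 3.1. The main obstacle I anticipate is purely bookkeeping: ensuring the self-consistency of the smallness thresholds — $\varepsilon_0$ must be chosen small enough (depending on $M, M', \|g\|_{L^\infty}$) that (i) Theorem 3.1 applies to produce $u_n$ on all of $I$, and (ii) the nonlinear terms in both the $w$-estimate and the $\tilde u$-estimate can be absorbed — and verifying that a single time block suffices here, so that no iteration over subintervals (which would be the content of a companion "long-time perturbations" lemma) is required at this stage.
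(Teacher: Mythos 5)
Your proposal is correct and follows essentially the same route as the paper: set $w = u_n - \tilde u$, use Duhamel plus Strichartz to obtain a closed cubic inequality for the $L^4_{t,x}$ size of $w$ (the paper bootstraps the auxiliary quantity $A(t)=\|F_n(u_n)-F_n(\tilde u)\|_{L^{4/3}_{t,x}([0,t])}$ rather than $\|w\|_{L^4_{t,x}([0,t])}$ directly, but the two are equivalent via Strichartz), close via a continuity argument using that the quantity vanishes at $t=0$, and then upgrade to the $S$-norm bounds with one more Strichartz application. Your extra observation that Theorem 3.1 supplies the existence of $u_n$ on $I$ (via $\|e^{it\Delta}u_0\|_{L^4_{t,x}}\lesssim\varepsilon_0$) is a detail the paper leaves implicit; the only thing to tighten is the notation — the bootstrap must be run on the time-truncated quantity $\|w\|_{L^4_{t,x}([0,t]\times\R^2)}$, not the fixed number $\|w\|_{L^4_{t,x}(I\times\R^2)}$, which you clearly intend but should write out.
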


\begin{proof}

Let $w=u_n-\tilde u$. Then $w$ is a solution to 
\begin{equation}
    \begin{cases}
    i w_t +\Delta w=F_n(\tilde u+w)-F_n(\tilde u)-e\\
    w(0)=w_0=u_0-\tilde u_0.
    \end{cases}
\end{equation}

For $t\in I$, we define $$A(t)=\|F_n(\tilde u+w)-F_n(\tilde u)\|_{L_{t,x}^\frac4 3([0,t]\times \R^2)}.$$
Then
\begin{equation}
    \begin{split}
        A(t)&\lesssim \|w(|\tilde u|^2+|w|^2)\|_{L_{t,x}^\frac4 3([0,t]\times \R^2)}\notag\\
        &\lesssim \|w\|_{L_{t,x}^4([0,t]\times \R^2)}^3+ \|\tilde u\|_{L_{t,x}^4([0,t]\times \R^2)}^2 \|w\|_{L_{t,x}^4([0,t]\times \R^2)}\\
        &\lesssim \|w\|_{L_{t,x}^4([0,t]\times \R^2)}^3 + \varepsilon_0^2 \|w\|_{L_{t,x}^4([0,t]\times \R^2)},
    \end{split}
\end{equation}
where the implicit constant depends only on $\|g\|_{L^\infty(\R^2)}$.

On the other hand, by Strichartz, (3.7) and (3.8),
$$\|w\|_{L_{t,x}^4([0,t]\times \R^2)}\lesssim \|e^{it\Delta}w_0\|_{L_{t,x}^4([0,t]\times \R^2)}+A(t)+\varepsilon\lesssim A(t)+\varepsilon.$$

So 
$$A(t)\lesssim (A(t)+\varepsilon)^3+ \varepsilon_0^2(A(t)+\varepsilon).$$
By taking cases based on which term of the right hand side dominates, we observe that, for $\varepsilon_0$ sufficiently small depending on $\|g\|_{L^\infty(\R^2)}$, the set $\{A(t):t\in [0,T]\}$ is a subset of $[0,\frac{1}{2}\varepsilon)\cup (\varepsilon_0,+\infty)$. However, $A(t)$ is continuous and $\lim_{t\to 0}A(t)=0$, so $A(t)\lesssim \varepsilon$ for all $t\in [0, T]$. Hence, 
\begin{align*}
    \|u_n-\tilde u \|_{L_{t,x}^4(I\times \R^2)}&=\|w\|_{L_{t,x}^4(I\times \R^2)}\lesssim A(T)+\varepsilon\lesssim\varepsilon,\\
    \|u_n-\tilde u \|_{S(I\times \R^2)}&\lesssim \|w_0\|_{L_x^2(\R^2)}+A(T)+\varepsilon \lesssim M'+\varepsilon\lesssim M'
\end{align*}
for $\varepsilon_0=\varepsilon_0(M', \|g\|_{L^\infty(\R^2)})$ sufficiently small.

By the Strichartz inequality,
\begin{equation}
    \begin{split}
        \|\tilde u\|_{S(I\times \R^2)}&\lesssim \|\tilde u\|_{L_t^\infty L_x^2(I\times \R^2)}+ \|\tilde u\|_{L_{t,x}^4(I\times \R^2)}^3+\varepsilon \notag\\
        &\lesssim M+\varepsilon_0^3+\varepsilon \lesssim M
    \end{split}
\end{equation}
for $\varepsilon_0=\varepsilon_0(M)$ small enough. Thus, for $\varepsilon_0=\varepsilon_0(M, M', \|g\|_{L^\infty(\R^2)})$ small enough,
$$\| u_n\|_{S(I\times \R^2)}\lesssim \|\tilde u\|_{S(I\times \R^2)}+M'\lesssim M+M'.$$
This completes the proof of the Lemma.
\end{proof}

\begin{theorem} [Stability]
Fix $g\in L^\infty(\R^2)$. Let $I=[0,T]$ be a compact time interval and let $\tilde u$ be an approximate solution to \eqref{eqn:NLS_n} in the sense that 
$$i \tilde u_t +\Delta \tilde u=F_n(\tilde u)+e$$
for some function $e$, with initial data $\tilde u(0)=\tilde u_0$. Assume also that
\begin{equation}
    \|\tilde u\|_{L_t^\infty L_x^2(I\times \R^2)}\leq M,
\end{equation}
\begin{equation}
    \|u_0-\tilde u_0\|_{L^2(\R^2)}\leq M',
\end{equation}
\begin{equation}
    \|\tilde u\|_{L_{t,x}^4(I\times \R^2)}\leq L
\end{equation}
for some $M, M', L >0$, $u_0\in L^2(\R^2)$, and
\begin{equation}
    \|e^{it\Delta}(u_0-\tilde u_0)\|_{L_{t,x}^4(I\times \R^2)}\leq \varepsilon,
\end{equation}
\begin{equation}
    \|\int_0^t e^{i(t-s)\Delta}e(s) ds\|_{L_{t,x}^4 (I\times \R^2)}\leq \varepsilon
\end{equation}
for some $0<\varepsilon\leq \varepsilon_1=\varepsilon_1(M,M',L, \|g\|_{L^\infty(\R^2)})$ small. Then there exists a unique solution $u_n$ to \eqref{eqn:NLS_n} with initial data $u_n(0)=u_0$ satisfying
\begin{equation}
 \|u_n-\tilde u \|_{L_{t,x}^4(I\times \R^2)}\leq C(M,M',L) \varepsilon,
 \end{equation}
\begin{equation}
\|u_n-\tilde u \|_{S(I\times \R^2)}\leq C(M,M',L) M',
\end{equation}
\begin{equation}
\|u_n\|_{S(I\times \R^2)}\leq C(M,M',L),
\end{equation}
where all implicit constants are allowed to depend on $\|g\|_{L^\infty(\R^2)}$.
\end{theorem}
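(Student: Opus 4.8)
The plan is to deduce the long-time statement from the short-time perturbation Lemma 3.4 by the standard subdivision-and-iteration scheme --- the same one that upgrades the short-time version of Theorem 1.5 to its full form. Since $\|\tilde u\|_{L^4_{t,x}(I\times\R^2)}\le L<\infty$, I first split $I=[0,T]$ into finitely many consecutive subintervals $I_0,\dots,I_{J-1}$, $I_j=[t_j,t_{j+1}]$ with $t_0=0$ and $t_J=T$, chosen so that $\|\tilde u\|_{L^4_{t,x}(I_j\times\R^2)}\le\varepsilon_\ast$ on each; here $\varepsilon_\ast$ is the threshold furnished by Lemma 3.4 for the parameters $M$ and the \emph{largest} data-difference norm that will occur in the iteration, so that $J$ depends only on $L,M,M'$ and $\|g\|_{L^\infty(\R^2)}$. (There is a mild circularity --- $\varepsilon_\ast$ depends on a quantity that depends on $J$ --- which is resolved by the monotonicity of the threshold in Lemma 3.4 in its arguments.)

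I then iterate Lemma 3.4 over the $I_j$. On $I_0$ its hypotheses are exactly those of Theorem 3.5 restricted to $I_0$, so it yields a solution $u_n$ on $I_0$ with $\|u_n-\tilde u\|_{L^4(I_0)}\lesssim\varepsilon$, $\|u_n-\tilde u\|_{S(I_0)}\lesssim M'$, $\|u_n\|_{S(I_0)}\lesssim M+M'$, and $\|F_n(u_n)-F_n(\tilde u)\|_{L^{4/3}(I_0)}\lesssim\varepsilon$, with implicit constants depending only on $\|g\|_{L^\infty(\R^2)}$. Assume inductively that $u_n$ has been built on $[0,t_j]$ with constants $C_0,\dots,C_{j-1}$, growing geometrically with ratio depending only on $\|g\|_{L^\infty(\R^2)}$, such that $\|u_n(t_k)-\tilde u(t_k)\|_{L^2}\le C_kM'$ and $\|F_n(u_n)-F_n(\tilde u)\|_{L^{4/3}(I_k)}\le C_k\varepsilon$ for all $k<j$. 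To extend across $I_j$, I apply Lemma 3.4 there with initial time $t_j$, approximate solution $\tilde u$, and data $u_n(t_j),\tilde u(t_j)$; the bounds $\|\tilde u\|_{L^\infty_tL^2_x(I_j)}\le M$ and $\|\tilde u\|_{L^4(I_j)}\le\varepsilon_\ast$ are immediate, and the data-difference bound is propagated by the $S$-norm conclusion of Lemma 3.4 on $I_{j-1}$: since $u_n-\tilde u$ is a single function continuous into $L^2$, $\|u_n(t_j)-\tilde u(t_j)\|_{L^2}\le\|u_n-\tilde u\|_{S(I_{j-1})}\lesssim\|u_n(t_{j-1})-\tilde u(t_{j-1})\|_{L^2}\le C_{j-1}M'$, giving the next constant.

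The two smallness hypotheses on $I_j$ require expanding $u_n(t_j)-\tilde u(t_j)$ through the Duhamel formula on $[0,t_j]$. The free part contributes at most $\varepsilon$ to $\|e^{i(t-t_j)\Delta}(u_n(t_j)-\tilde u(t_j))\|_{L^4(I_j)}$, using the hypothesis $\|e^{it\Delta}(u_0-\tilde u_0)\|_{L^4_{t,x}(I)}\le\varepsilon$ and the group law $e^{i(t-t_j)\Delta}e^{it_j\Delta}=e^{it\Delta}$. Each nonlinear piece $\int_{I_k}e^{i(t-s)\Delta}\big(F_n(u_n)-F_n(\tilde u)\big)\,ds$ with $k<j$ is the free evolution of a fixed $L^2$ function (since it is integrated over all of $I_k$, which lies to the left of $I_j$), hence contributes $\lesssim\|F_n(u_n)-F_n(\tilde u)\|_{L^{4/3}(I_k)}\le C_k\varepsilon$ by dual Strichartz followed by Strichartz; these sum to a geometric constant times $\varepsilon$. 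The error contributions I handle by absorbing $W:=-i\int_0^te^{i(t-s)\Delta}e(s)\,ds$ into the approximate solution: on replacing $\tilde u$ by $\tilde u-W$ (which has the same initial data $\tilde u_0$), the new error $F_n(\tilde u)-F_n(\tilde u-W)$ is a genuine spacetime function with $\|F_n(\tilde u)-F_n(\tilde u-W)\|_{L^{4/3}(I_j)}\lesssim\|W\|_{L^4(I_j)}\big(\|\tilde u\|_{L^4(I_j)}^2+\|W\|_{L^4(I_j)}^2\big)\lesssim\varepsilon(\varepsilon_\ast^2+\varepsilon^2)\lesssim\varepsilon$, thanks to the hypothesis $\|W\|_{L^4_{t,x}(I)}\le\varepsilon$, so that its retarded Duhamel integral on $I_j$ is also $\lesssim\varepsilon$ (equivalently, a Christ--Kiselev retarded estimate bounds $\|\int_{t_j}^te^{i(t-s)\Delta}e(s)\,ds\|_{L^4(I_j)}$ uniformly in $j$). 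Thus both smallness quantities on $I_j$ are at most $C_j'\varepsilon$, which is $\le\varepsilon_\ast$ once $\varepsilon\le\varepsilon_1:=(C_J')^{-1}\varepsilon_\ast$, a constant depending only on $M,M',L,\|g\|_{L^\infty(\R^2)}$. Lemma 3.4 then provides $u_n$ on $I_j$ with its four estimates, which advances the induction. Concatenating the local solutions gives $u_n$ on all of $I$ (unique on each $I_j$ by Lemma 3.4, hence unique on $I$), and summing the local estimates --- using $\|u_n-\tilde u\|_{L^4(I)}^4=\sum_j\|u_n-\tilde u\|_{L^4(I_j)}^4$, $\|u_n-\tilde u\|_{S(I)}\le\sum_j\|u_n-\tilde u\|_{S(I_j)}$, and the analogous bound for $\|u_n\|_{S(I)}$ --- yields the three conclusions of Theorem 3.5 with constants $C(M,M',L)$ (also depending on $\|g\|_{L^\infty(\R^2)}$). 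I expect the only real difficulty to be bookkeeping: ensuring the geometric growth of the $C_k$ over the $J$ steps remains absorbable into the smallness of $\varepsilon_1$ and $\varepsilon_\ast$, and respecting the order in which the constants must be chosen (fix the data-difference bound, then $\varepsilon_\ast$, then $J$, then $\varepsilon_1$). The single genuinely analytic point, rather than a formality, is transferring the global $L^4_{t,x}$-smallness of $\int_0^te^{i(t-s)\Delta}e\,ds$ down to each subinterval, which is what forces the substitution $\tilde u\mapsto\tilde u-W$ (or the Christ--Kiselev argument) above.
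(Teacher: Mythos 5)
Your proposal is correct and follows the paper's strategy: subdivide $I$ into $J\sim(1+L/\varepsilon_0)^4$ subintervals on which $\|\tilde u\|_{L^4_{t,x}}$ is small, apply the short-time perturbation lemma on each, and propagate the data-difference forward by induction. The only substantive differences are in bookkeeping. Where you allow the $L^2$ data difference at $t_j$ to grow geometrically as $C^{j}M'$ (propagating it through the $S$-norm conclusion on $I_{j-1}$) and then choose $\varepsilon_\ast$ for the worst case $C^J M'$, the paper instead expands Duhamel from time $0$ to get
$\|u_n(t_j)-\tilde u(t_j)\|_{L^2}\lesssim M'+\sum_{k<j}C(k)\varepsilon+\varepsilon$,
and by taking $\varepsilon\le\varepsilon_1(M,M',J,\|g\|_{L^\infty})$ small keeps the data difference uniformly below $2M'$ for all $j$; this makes the short-time threshold $\varepsilon_0=\varepsilon_0(M,2M',\|g\|_{L^\infty})$ fixed throughout and sidesteps the circularity you flag --- which, as you correctly observe, is in any case benign because the threshold $\varepsilon_0$ in the short-time lemma is non-decreasing in the data-difference parameter. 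You are also more explicit than the paper about a genuine subtlety: the hypothesis bounds $\int_0^t e^{i(t-s)\Delta}e\,ds$ in $L^4_{t,x}(I)$, not the retarded Duhamel $\int_{t_j}^t e^{i(t-s)\Delta}e\,ds$ in $L^4_{t,x}(I_j)$ that a naive application of the short-time lemma on $I_j$ would require, and your device of absorbing $W=-i\int_0^te^{i(t-s)\Delta}e\,ds$ into the approximate solution (or equivalently a Christ--Kiselev estimate) handles this cleanly. So the route is the same; the paper's accounting of the data-difference is slightly cleaner, while your treatment of the error term's restriction to subintervals is more careful.
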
 

\begin{proof}

The assumptions of this theorem are very similar to those of Lemma 3.3; the only difference is that (3.6) has been replaced by (3.12). In the hope of being able to apply Lemma 3.3, we divide $I=[0,T]$ into $J$ subintervals $I_j=[t_j, t_{j+1}]$, $0\leq j<J$, so that
$$\|\tilde u\|_{L_{t,x}^4(I_j\times \R^2)}\leq \varepsilon_0=\varepsilon_0(M, 2M', \|g\|_{L^\infty(\R^2)})\,\,\text{for all}\,\,j.$$
Note that $J\sim (1+\frac L {\varepsilon_0})^4$. The reason we are using the small constant $\varepsilon_0$ associated with $2M'$ instead of $M'$ is that we cannot guarantee that $\|u_n(t_j)-\tilde u(t_j)\|_{L_x^2(\R^2)}\leq M'$, but we will prove that $\|u_n(t_j)-\tilde u(t_j)\|_{L_x^2(\R^2)}\leq 2M'$. \newline
Proceeding inductively, we show that for all $0\leq j<J$ and $0<\varepsilon\leq \varepsilon_1$, where $\varepsilon_1=\varepsilon_1(M,M',J)$ sufficiently small,
$$\|u_n-\tilde u \|_{L_{t,x}^4(I_j\times \R^2)}\leq C(j) \varepsilon,$$
$$\|u_n-\tilde u \|_{S(I_j\times \R^2)}\leq C(j) M',$$
$$\|u_n\|_{S(I_j\times \R^2)}\leq C(j)(M+M'),$$
$$\|F_n(u_n)-F_n(\tilde u) \|_{L_{t,x}^{\frac 4 3}(I_j\times \R^2)}\leq C(j) \varepsilon ,$$
where the constants $C(j)$ are allowed to depend on $\|g\|_{L^\infty(\R^2)}$.
Indeed, by Lemma 3.3, it suffices to show that 
$$\|u_n(t_j)-\tilde u(t_j)\|_{L_x^2(\R^2)}\leq 2 M'$$
and
$$\|e^{i(t-t_j)\Delta}(u_n(t_j)-\tilde u(t_j))\|_{L_{t,x}^4(I_j\times \R^2)}\lesssim \varepsilon $$
for all $0\leq j<J$ and $0<\varepsilon\leq \varepsilon_1.$

We verify these conditions by induction. Firstly, (3.11) and (3.13) suggest that the conditions clearly hold for $j=0$. Next, we assume the conditions are satisfied for all $0\leq k<j$. By the Strichartz inequality,
\begin{equation}
    \begin{split}
        \|u_n(t_j)-\tilde u(t_j)\|_{L^2(\R^2)} &\lesssim \|u_0-\tilde u_0\|_{L_x^2(\R^2)} +\|F_n(u_n)-F_n(\tilde u) \|_{L_{t,x}^{\frac 4 3}([0,t_j]\times \R^2)}+ \varepsilon\notag\\
        &\lesssim M' +\sum_{k=0}^{j-1} C(k)\varepsilon+\varepsilon.
    \end{split}
\end{equation}

Again by Strichartz inequality,
\begin{equation}
    \begin{split}
        \|e^{i(t-t_j)\Delta}(u_n(t_j)-\tilde u(t_j))\|_{L_{t,x}^4(I_j\times \R^2)}&\lesssim \|e^{it\Delta}(u_0-\tilde u_0)\|_{L_{t,x}^4(I_j\times \R^2)} \\
        &+ \|F_n(u_n)-F_n(\tilde u) \|_{L_{t,x}^{\frac 4 3}([0,t_j]\times \R^2)}+ \varepsilon\notag\\
        &\lesssim \varepsilon + \sum_{k=0}^{j-1} C(k)\varepsilon.
    \end{split}
\end{equation}
Thus, by choosing $\varepsilon_1=\varepsilon_1(M,M',J, \|g\|_{L^\infty(\R^2)})$ sufficiently small we can ensure that the conditions above will hold for $j$.
\end{proof}

\begin{remark}
At this point we would like to note that even for small values of $M'$, the constant $C(M,M',L)$ or the bound in (3.16) $C(M,M',L)M$ need not be small. In the proof of Lemma 3.3 we chose $\varepsilon_0<M'$. The argument in the proof of Theorem 3.4 suggests that the constant $C(M,M',L)$ depends on $J$, which becomes very large for small values of $M'$ ($J\gtrsim (M')^4$). However, if $\tilde u_0=u_0$, by applying the theorem above for $M'=1$, we are able to use Strichartz inequality and exploit the bounds (3.12) and (3.15) to obtain
\begin{align*}
    \|u_n-\tilde u\|_{S}&\lesssim \|u_n-\tilde u\|_{L_{t,x}^4}(\|\tilde u\|_{L_{t,x}^4}^2+\|u_n-\tilde u\|_{L_{t,x}^4}^2) +\|\int_0^t e^{i(t-s)\Delta}e(s) ds\|_{L_{t,x}^4 }\\
    &\lesssim C(M,L)\varepsilon (L^2+ C(M,L)^2\varepsilon^2)+\varepsilon\\
    &\leq \tilde C(M,L)\varepsilon.
\end{align*}

\end{remark}

\section{Proof of Theorem 1.1}

Fix $u_0\in L^2(\R^2)$. Let $g\in L^\infty(\R^2)$ and suppose there exists $\bar g\geq 0$ such that (1.1) holds for every $R>0$.
As we mentioned earlier, Dodson's result guarantees that there exists a unique global solution $u$ to (NLS) with initial data $u_0$. Our goal is to show that, for $n$ sufficiently large, $u$ is an approximate solution to ($NLS_n$) in the sense of Theorem 3.4. Then the perturbation theory we established in the previous section will imply that, for $n$ sufficiently large, ($NLS_n$) has a global solution that is unique, obeys spacetime bounds, and approximates $u$ in $L^4_{t,x}(\R\times \R^2)$.

Note that $u$ is a solution to 
\begin{equation}
\begin {cases}
i  u_t +\Delta u= F_n( u)+e_n\notag\\
u(0)=u_0
\end{cases}
\end{equation}
with
$$e_n=\bar g F(u)-F_n(u)=(\bar g-g(nx))F(u).$$ 
Clearly $u$ satisfies conditions (3.10)--(3.13) in Theorem 3.4, so it suffices to show that 
\begin{equation}
\tag{$\ast$}\label{error}
\Bigl\|\int_0^t e^{i(t-s)\Delta} (g(nx)-\bar g)  F(u (s)) ds\Bigr\|_{L_{t,x}^4(\R\times \R^2)}<\varepsilon
\end{equation}
for $n$ sufficiently large.

Before we begin, let us outline the steps we are going to follow and prove some useful results.

We will split $F(u)$ into a part where high frequencies of $u$ appear and a part including only low frequencies. For the first part, persistence of regularity and stability allow us to derive the desired bound provided high enough frequencies are present; the only property of $g$ that is required is boundedness. For the second part, the bound is obtained as a result of the smallness of $(-\Delta+1)^{-1}[g(nx)-\bar g]$ and $\nabla (-\Delta+1)^{-1}[g(nx)-\bar g]$ on a fixed ball for $n$ sufficiently large, combined with an estimate for the projection onto low frequencies of a smooth compactly supported function and the boundedness of $(-\Delta+1)^{-1}[g(nx)-\bar g]$ and $\nabla (-\Delta+1)^{-1}[g(nx)-\bar g]$ everywhere else. 
The following lemmas record the aforementioned estimates that will be essential in the proof of Theorem 1.1.

\begin{lemma}
Let $d\geq 1$. Suppose $f\in C_c^\infty(\R^d)$ and let $R>0$ such that $\supp f\subset B(0,R)$. Then for any $N\in 2^{\mathbb Z}$ such that $NR>1$, $1<p<\infty$, $x\in \R^d$ such that $|x|>2R$, and $c>0$,
$$|(P_{\leq N}f)(x)|\lesssim N^{\frac d p} \|f\|_{L^p(\R^d)} [(|x|-R)N]^{-c}.$$
\end{lemma}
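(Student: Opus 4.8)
The plan is to exploit the fact that $P_{\le N}$ has integral kernel $N^d \check m(N\cdot)$, where $\check m$ is Schwartz because $m\in C_c^\infty$, and then use the separation $|x|>2R$ from the support of $f$ to extract arbitrary polynomial decay. First I would write, for $|x|>2R$,
$$(P_{\le N}f)(x) = \int_{\R^d} N^d \check m\big(N(x-y)\big) f(y)\,dy = \int_{|y|\le R} N^d \check m\big(N(x-y)\big) f(y)\,dy,$$
using $\supp f\subset B(0,R)$. For $y$ in this ball we have $|x-y|\ge |x|-R>0$, so $|N(x-y)|\ge (|x|-R)N$, which is $>1$ since $(|x|-R)N > (2R-R)N = RN > 1$. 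Hence I may apply the rapid decay of $\check m$: for the exponent $c>0$ in the statement (rounded up to an integer, or handled directly), $|\check m(z)|\lesssim_c |z|^{-c}$ for $|z|\ge 1$, giving $|\check m(N(x-y))|\lesssim_c [(|x|-R)N]^{-c}$ uniformly over $y\in B(0,R)$.

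Pulling this bound out of the integral leaves
$$|(P_{\le N}f)(x)| \lesssim_c N^d [(|x|-R)N]^{-c} \int_{|y|\le R} |f(y)|\,dy,$$
and the remaining integral is estimated by Hölder: $\int_{|y|\le R}|f(y)|\,dy \le |B(0,R)|^{1/p'}\|f\|_{L^p(\R^d)} \lesssim R^{d/p'}\|f\|_{L^p}$. This produces a factor $N^d R^{d/p'}$, whereas the claimed bound wants $N^{d/p}$. To reconcile them, I would rewrite $N^d R^{d/p'} = N^{d/p} (NR)^{d/p'}$ and absorb the factor $(NR)^{d/p'}$: since $NR>1$ and $c$ is an arbitrary positive constant, I replace $c$ by $c + d/p'$ at the start of the argument, so that $[(|x|-R)N]^{-c-d/p'}\cdot (NR)^{d/p'} \le [(|x|-R)N]^{-c}$ — here I use $(|x|-R)N \ge RN = NR$, valid because $|x|>2R$ gives $|x|-R>R$. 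This yields exactly $|(P_{\le N}f)(x)|\lesssim N^{d/p}\|f\|_{L^p}[(|x|-R)N]^{-c}$, with the implicit constant depending on $d$, $p$, $c$ only.

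The argument is essentially routine once the kernel representation is in place; the only genuine bookkeeping point — and the step I would be most careful about — is the juggling of the powers of $N$ and $R$ to convert the crude $N^d$ from the kernel normalization into the sharp $N^{d/p}$, which forces the harmless trick of proving the estimate for a larger decay exponent and renaming. One should also note why $\check m$ decays: $m\in C_c^\infty(\R^d)\subset\mathcal S(\R^d)$, so $\check m\in\mathcal S(\R^d)$, hence $|z|^{c+d/p'}|\check m(z)|$ is bounded. No smoothness of $f$ beyond integrability on its support is actually used, consistent with the fact that only $\|f\|_{L^p}$ appears on the right-hand side; the hypothesis $f\in C_c^\infty$ is there merely to make $P_{\le N}f$ pointwise well-defined and to fix the compact support.
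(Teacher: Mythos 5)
Your proof is correct. It takes a mildly different route from the paper: where you bound the kernel pointwise by $[(|x|-R)N]^{-c'}$ (with $c'=c+d/p'$), pull it out of the integral, Hölder $\int_{B(0,R)}|f|\le |B(0,R)|^{1/p'}\|f\|_{L^p}$, and then absorb the resulting $(NR)^{d/p'}$ via the observation $(|x|-R)N\ge NR$, the paper instead applies Hölder with exponents $(p,q)$ directly to $\int_{|y|\le R}|f(y)|\,N^d|\check m(N(x-y))|\,dy$, changes variables in the kernel factor (which produces the $N^{d/p}$ from the Jacobian), and then estimates the $L^q$ norm of $\langle y\rangle^{-(d+qc)}$ over the shifted ball $\{|y-Nx|\le NR\}$, using that this set lies in $\{|y|\ge N(|x|-R)\}$. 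Both arguments rest on the same two ingredients — $\check m\in\mathcal S$ and the separation $|x-y|\ge |x|-R$ — and are of comparable length; the paper's version lets the exponent $d/p$ fall out automatically without your absorption/renaming step, while yours uses only an $L^\infty$ bound on the kernel, which some may find more transparent. Your concluding remark that smoothness of $f$ is not genuinely used is also accurate and a nice observation.
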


\begin{proof}

Let $1<q<\infty$ such that $\frac 1 p +\frac 1 q=1$. In what follows $\check m$ denotes the inverse Fourier transform of $m$. Using H\"older inequality, the definition of the Littlewood-Paley projections and the fact that $\check m\in \mathcal S(\R^2)$, we get that 
\begin{equation}
    \begin{split}
        |(P_{\leq N}f)(x)|&\leq \int_{\{y\in\R^2: |y|\leq R\}} |f(y)|N^d |\check m (N(x-y))| dy \notag\\
        &\leq N^{\frac d p} \|f\|_{L^p(\R^d)} \big(\int_{\{y\in\R^2: |y-Nx|\leq NR\}} |\check m (y)|^{q} dy \big)^{\frac 1 q}\\
        &\lesssim N^{\frac d p} \|f\|_{L^p(\R^d)} \big(\int_{\{y\in\R^2: |y-Nx|\leq NR\}} \langle y\rangle ^{-(d+qc)} dy \big)^{\frac 1 q}\\
        & \lesssim N^{\frac d p} \|f\|_{L^p(\R^d)} [(|x|-R)N]^{-c}. 
    \end{split}
\end{equation}
\end{proof}

\begin{lemma}
Assume $g\in L^\infty(\R^2)$ and for every $R>0$
$$\lim_{n\to\infty}\|(-\Delta +1)^{-1}g(nx)\|_{L^\infty (|x|\leq R)}=0.$$ 
Then:
\begin{enumerate}
    \item 
    $\|(-\Delta +1)^{-1}g(nx)\|_{L^\infty (\R^2)}\lesssim \|g\|_{L^\infty(\R^2)}$  uniformly in $n$.\\[1mm]
    
    \item $\lim_{n\to \infty}\|\nabla(-\Delta +1)^{-1}g(nx)\|_{L^\infty (|x|\leq R)}=0$ for every $R>0$.\\[1mm]
    
    \item $\|\nabla(-\Delta +1)^{-1}g(nx)\|_{L^\infty (\R^2)}\lesssim \|g\|_{L^\infty(\R^2)}$ uniformly in $n$.
\end{enumerate}
\end{lemma}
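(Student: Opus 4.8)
The plan is to prove the three claims in order, treating (1) as the foundation and then bootstrapping to the gradient estimates in (2) and (3) by exploiting the specific form of the operator $(-\Delta+1)^{-1}$ in two dimensions. The key structural fact I would use throughout is that $(-\Delta+1)^{-1}$ is convolution with the Bessel potential $G_2$, which is positive, radially decreasing, integrable (with $\|G_2\|_{L^1(\R^2)}=1$ since $\widehat{G_2}(0)=1$), and decays exponentially at infinity; moreover $\nabla G_2$ is likewise integrable on $\R^2$ (this is where $d=2$ matters — the only singularity of $G_2$ at the origin is logarithmic, so $\nabla G_2$ has an $|x|^{-1}$ singularity, which is locally integrable in the plane).

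For claim (1), I would simply write $(-\Delta+1)^{-1}g(nx) = G_2 * [g(n\,\cdot)]$ and apply Young's inequality: $\|G_2 * [g(n\cdot)]\|_{L^\infty(\R^2)} \leq \|G_2\|_{L^1(\R^2)}\|g(n\cdot)\|_{L^\infty(\R^2)} = \|g\|_{L^\infty(\R^2)}$, which is the uniform-in-$n$ bound. For claim (3), the same argument applies with $G_2$ replaced by $\nabla G_2$: since $\nabla(-\Delta+1)^{-1}g(nx) = (\nabla G_2)*[g(n\cdot)]$ and $\nabla G_2 \in L^1(\R^2)$, Young's inequality gives $\|\nabla(-\Delta+1)^{-1}g(nx)\|_{L^\infty(\R^2)}\lesssim \|\nabla G_2\|_{L^1(\R^2)}\|g\|_{L^\infty(\R^2)} \lesssim \|g\|_{L^\infty(\R^2)}$ uniformly in $n$.

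Claim (2) is the one requiring real work, and I expect it to be the main obstacle: we must upgrade the hypothesized decay of $(-\Delta+1)^{-1}g(nx)$ on balls to decay of its gradient on balls. The difficulty is that $\nabla(-\Delta+1)^{-1}$ does not commute with restriction to a ball, so smallness of $h_n := (-\Delta+1)^{-1}g(nx)$ on $\{|x|\le R\}$ does not immediately control $\nabla h_n$ there. My plan is to use the elliptic identity $\Delta h_n = h_n - g(nx)$ together with interior elliptic estimates, but localized carefully. Fix $R>0$; work on the slightly larger ball $\{|x|\le 2R\}$. Split $g(nx) = g(nx)\chi_{\{|x|\le 3R\}} + g(nx)\chi_{\{|x|>3R\}} =: g_1 + g_2$ and correspondingly $h_n = (-\Delta+1)^{-1}g_1 + (-\Delta+1)^{-1}g_2$. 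For the far piece, $(\nabla G_2)*g_2$ restricted to $\{|x|\le 2R\}$ is bounded by $\|g\|_{L^\infty}\int_{|y|>R}|\nabla G_2(y)|\,dy$, which is not small — so this naive split fails, and instead I would keep the exact representation $\nabla h_n(x) = \int_{\R^2}\nabla G_2(x-y)\,g(ny)\,dy$ and integrate by parts to move the derivative onto $g(ny)$... but $g$ is only $L^\infty$.

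The correct route, which I would pursue, is the following: the hypothesis "smallness on every ball" is equivalent (via the exponential decay of $G_2$ and a covering argument) to the statement that for every $R$, $\sup_{|x_0|\le R}\|G_2 * [g(n\cdot)\chi_{B(x_0,1)}]\|_{L^\infty} + (\text{tail}) \to 0$; more usefully, I would argue by contradiction and compactness. Suppose $\|\nabla h_n\|_{L^\infty(|x|\le R)}\not\to 0$; pass to a subsequence with $|\nabla h_n(x_n)|\ge c>0$, $|x_n|\le R$. Since $\|h_n\|_{L^\infty}\lesssim \|g\|_\infty$ and $\Delta h_n = h_n - g(nx)$ with right side bounded in $L^\infty$, elliptic $W^{2,p}_{\mathrm{loc}}$ estimates give $h_n$ bounded in $W^{2,p}(|x|\le 2R)$ for all $p<\infty$, hence (Morrey) bounded in $C^{1,\gamma}(|x|\le 2R)$; so along a further subsequence $h_n\to h$ in $C^1(|x|\le 3R/2)$. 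But $h_n\to 0$ uniformly on $\{|x|\le 2R\}$ by hypothesis, so $h\equiv 0$ there, forcing $\nabla h_n\to 0$ in $C^0(|x|\le R)$, contradicting $|\nabla h_n(x_n)|\ge c$. This compactness argument is clean and avoids the failed kernel splitting; the one point needing care is that the elliptic estimate constant must be uniform in $n$, which it is since the operator $\Delta$ and the domain are fixed. I would present claim (2) via this contradiction/elliptic-regularity argument, and I expect assembling the precise elliptic estimate (and justifying the uniform bound on $\|g(n\cdot)\|_{L^\infty}$ as the only $n$-dependent input) to be the delicate bookkeeping.
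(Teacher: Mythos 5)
Your proof is correct, but it takes a genuinely different route from the paper's. For claims (1) and (3), the paper does not invoke the explicit Bessel kernel $G_2$; instead it decomposes $g(n\cdot)=P_{>M}g(n\cdot)+P_{\leq M}g(n\cdot)$ via Littlewood--Paley, handles high frequencies by Bernstein (giving $M^{-2}\|g\|_\infty$, resp.\ $M^{-1}\|g\|_\infty$ for the gradient), and handles low frequencies by convolving with the Schwartz kernels $[\frac{1}{|\xi|^2+1}m(\xi/M)]^\vee$ and $[\frac{\xi}{|\xi|^2+1}m(\xi/M)]^\vee$, which lie in $L^1$. Your route using $G_2, \nabla G_2\in L^1(\R^2)$ and Young is more elementary and shorter; the paper's version is arguably more self-contained in that it never needs the pointwise structure or decay of the Bessel kernel, only Bernstein and the fact that the inverse Fourier transform of a compactly supported smooth symbol is Schwartz. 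For claim (2), the divergence is more substantial: the paper again splits $h_n=P_{>M}h_n+P_{\leq M}h_n$, controls the high piece by $M^{-1}\|g\|_\infty$, and for the low piece writes $\nabla P_{\leq M}h_n=K_2\ast h_n$ with $K_2=[\xi\,m(\xi/M)]^\vee$ and splits the convolution integral over $\{|x-y|\leq R\}$ (where $h_n$ is uniformly small by hypothesis) and $\{|x-y|>R\}$ (where the rapid decay of $K_2$ and the uniform bound from (1) apply), then sends $R\to\infty$, $M\to\infty$. Your argument instead leverages the PDE $\Delta h_n=h_n-g(n\cdot)$ and interior $W^{2,p}\hookrightarrow C^{1,\gamma}$ estimates to get local $C^1$-compactness, and concludes by a subsequence argument from the uniform convergence $h_n\to 0$ on balls. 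Both are correct; the paper's proof is more elementary (no elliptic regularity theory needed) and produces quantitative rates in $M$, $R$, while yours is conceptually cleaner and generalizes painlessly to higher dimensions and to other constant-coefficient elliptic operators. One minor note: your aside that $d=2$ is "where it matters" for $\nabla G_2\in L^1_{\mathrm{loc}}$ is not quite right---$\nabla G_d$ behaves like $|x|^{1-d}$ near the origin, which is locally integrable against $r^{d-1}\,dr$ in every dimension, so integrability of $\nabla G_d$ near $0$ holds for all $d$; the dimension plays no special role there.
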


\begin{proof}

Fix $M\in 2^{\mathbb N}$ and decompose $g(nx)=P_{>M}g(nx)+P_{\leq M}g(nx)$. One can easily see that 
$$-\Delta(-\Delta+1)^{-1}= 1-(-\Delta+1)^{-1},$$
hence by Bernstein
\begin{align*}
    \|(-\Delta +1)^{-1}P_{> M}&g(nx)\|_{L^\infty(\R^2)}\lesssim M^{-2} \|[1-(-\Delta +1)^{-1}]P_{> M}g(nx)\|_{L^\infty(\R^2)}\\
    &\lesssim M^{-2} \big(\|P_{> M}g(nx)\|_{L^\infty(\R^2)}+\|(-\Delta +1)^{-1}P_{> M}g(nx)\|_{L^\infty(\R^2)}\big).
\end{align*}
Then, for $M$ sufficiently large,
\begin{equation}
\|(-\Delta +1)^{-1}P_{> M}g(nx)\|_{L^\infty(\R^2)}\lesssim M^{-2} \|P_{> M}g(nx)\|_{L^\infty(\R^2)}\lesssim M^{-2}\|g\|_{L^\infty(\R^2)}
\end{equation}
uniformly in $n$.
    
To estimate the low frequencies, let
$$K_1(x)=  \Big[\frac{1}{|\xi|^2+1} m(\tfrac{\xi}{M})\Big]\check (x)$$
where $m\in C_c^\infty(\R^2)$ is the smooth cutoff function associated with the Littlewood-Paley projection.

Observe that for any $h$,
$$[(-\Delta +1)^{-1}P_{\leq M}h](x)=[K_1\ast h](x).$$
Since $g\in L^\infty(\R^2)$ and $K_1\in L^1(\R^2)$ as the inverse Fourier transform of a Schwartz function, we conclude that
\begin{equation}
\|(-\Delta +1)^{-1}P_{\leq M}g(nx)\|_{L^\infty(\R^2)}\lesssim \|g\|_{L^\infty(\R^2)}\quad\text{uniformly in}\,\, n.
\end{equation}
Combining (4.1) and (4.2) we obtain 
\begin{equation}
\|(-\Delta +1)^{-1}g(nx)\|_{L^\infty(\R^2)}\lesssim \|g\|_{L^\infty(\R^2)}\quad\text{uniformly in}\,\, n.
\end{equation}
    
Next, let $h_n(x)=(-\Delta +1)^{-1}g(nx)$. To estimate the high frequencies we exploit the boundedness of $g$. Using Bernstein and (4.1),
\begin{align}
    \|\nabla P_{> M}h_n(x)\|_{L^\infty(\R^2)}&\lesssim M^{-1} \|\Delta(-\Delta+1)^{-1} P_{> M}g(nx)\|_{L^\infty(\R^2)}\notag\\
    &\lesssim M^{-1} \big(\|P_{> M}g(nx)\|_{L^\infty(\R^2)}+ M^{-2} \|P_{> M}g(nx)\|_{L^\infty(\R^2)}\big)\notag\\
    &\lesssim M^{-1}\|g\|_{L^\infty(\R^2)}\quad\text{uniformly in}\quad n.
\end{align}

Let $R>0$, $\varepsilon>0$. There exists $n_0\in \N$ such that for all $n>n_0$
$$\|h_n(x)\|_{L^\infty (|x|\leq 2R)}<\varepsilon.$$
Let
$$K_2(x)= [\xi m(\tfrac{\xi}{M})]\check  (x).$$
Then clearly 
$$\|K_2\|_{L^1(\R^2)}\lesssim M.$$
For $|x|\leq R$, using (4.3) and the rapid decay of $K_2$,
\begin{align}
        |\nabla P_{\leq M}h_n(x)|&\leq \int_{\R^2} |h_n(y)| |K_2(x-y)|dy\notag\\
        &\leq\int_{\{|x-y|\leq R\}} \varepsilon |K_2(x-y)|dy + \int_{\{|x-y|> R\}} |h_n(y)| |K_2(x-y)|dy\notag\\
        &\lesssim_M \varepsilon + \|g\|_{L^\infty(\R^2)} \int_{\{|y|> R\}} \langle y \rangle ^{-4} dy\notag\\
        &\lesssim_M \varepsilon +\|g\|_{L^\infty(\R^2)} R^{-2}\notag\\
        &\lesssim_M \varepsilon
\end{align}
by taking $R$ sufficiently large. Combining (4.4) and (4.5) and letting $\varepsilon\to 0$, $R\to\infty$, and then $M\to\infty$ we derive claim (2).

We now turn to claim (3). Observe that for any $h$
$$\nabla (-\Delta +1)^{-1}P_{\leq M}h=K_3\ast h,$$
where 
$$K_3(x)= \big[\frac{\xi}{|\xi|^2+1} m(\tfrac{\xi}{M})\big]\check  (x).$$
Then, since $K_3\in L^1(\R^2)$ (as the inverse Fourier transform of a Schwartz function), and $g\in L^\infty (\R^2)$, we conclude that 
\begin{equation}
\|\nabla(-\Delta +1)^{-1}P_{\leq M}g(nx)\|_{L^\infty (\R^2)}\lesssim \|g\|_{L^\infty(\R^2)}
\end{equation}
uniformly in $n$. Then (4.4) and (4.6) imply that
\begin{equation}
\|\nabla(-\Delta +1)^{-1}g(nx)\|_{L^\infty (\R^2)}\lesssim \|g\|_{L^\infty(\R^2)}
\end{equation}
uniformly in $n$.

\end{proof}

\begin{remark}
Note that the estimates (4.1), (4.2), (4.4) and (4.6), and consequently (4.3) and (4.7), hold for any $g\in L^\infty(\R^2)$.
\end{remark}

We are now ready to complete the proof of Theorem 1.1.

\begin{proof}[Proof of Theorem 1.1]
As stated above, it is enough to show that given $\varepsilon>0$,
$$\Bigl\|\int_0^t e^{i(t-s)\Delta} (g(nx)-\bar g)  F(u)(s) ds\Bigr\|_{L_{t,x}^4(\R\times \R^2)}<\varepsilon$$
for $n$ sufficiently large. We observe that for $h(x)=g(x)-\bar g$ we can write $g(x)=h(x)+\bar g$ and $h$ satisfies (1.1) with $\bar h=0$. Therefore, without loss of generality, we may assume $\bar g=0$ in what follows whenever the quantity $g(nx)-\bar g$ is involved.

We split $u$ into low and high frequencies, $P_{\leq N} u$ and $P_{> N} u$ respectively. Here $N$ is a large dyadic integer that will be chosen shortly. 

First we deal with the terms where at least one $P_{>N} u$ is present. For this we will only use the boundedness of $g$. In what follows, we will allow the implicit constants to depend on $\|u_0\|_{L^2(\R^2)}$ and on $\|g\|_{L^\infty(\R^2)}$.

Let $\delta_0=\delta_0(\|u_0\|_{L^2(\R^2)},\|u_0\|_{L^2(\R^2)},  C(\|u_0\|_{L^2(\R^2)}), \bar g)$ be the small constant in Theorem 1.5, where $C(\|u_0\|_{L^2(\R^2)})$ is the spacetime bound in Dodson's result (Theorem 1.4). We choose $0<\delta\ll \min\{\varepsilon, \delta_0\}$ and $M\in 2^{\mathbb Z}$ large such that 
$$\|P_{>M}u_0\|_{L^2(\R^2)}<\delta.$$
Take $v_0=u_0-P_{>M}  u_0= P_{\leq M} u_0$. By Dodson's result, there exists a global solution $v$ to (NLS) with initial data $v_0$ which satisfies 
$$\|v\|_{L_{t,x}^4(\R\times\R^2)}\leq C(\|u_0\|_{L^2(\R^2)}).$$ 
Persistence of regularity, combined with Strichartz, guarantees that 
$$\|\nabla v\|_{S(\R\times\R^2)} \lesssim \|\nabla v_0\|_{L^2(\R^2)} \lesssim M.$$
Since $\|P_{>M} u_0\|_{L^2(\R^2)}<\delta$, by Strichartz we have that $\|e^{it\Delta} (u_0 -v_0)\|_{L_{t,x}^4(\R\times \R^2)}\lesssim\delta$. Then Theorem 1.5 ensures that 
$\| u-v\|_{L_{t,x}^4(\R\times\R^2)}\lesssim \delta$ and so
\begin{equation}
\begin {split}
\|P_{>N}  u\|_{L_{t,x}^4(\R\times\R^2)}&\leq \|P_{>N} ( u-v)\|_{L_{t,x}^4(\R\times\R^2)}+\|P_{>N} v\|_{L_{t,x}^4(\R\times\R^2)}\notag\\
&\lesssim \delta+N^{-1} \|\nabla v\|_{L_{t,x}^4(\R\times\R^2)}\lesssim \delta
\end{split}
\end{equation}
for $N\gg \frac M \delta$.

Therefore, for $N$ sufficiently large the previous estimate and Strichartz inequality yield
\begin{align*}
\|\int_0^t e^{i(t-s)\Delta} g(nx) H_N(u(s)) ds\|_{L_{t,x}^4(\R\times \R^2)}\lesssim \|P_{>N} u\|_{L_{t,x}^4(\R\times\R^2)} \| u\|_{L_{t,x}^4(\R\times\R^2)}^2 \lesssim \varepsilon,
\end{align*}
where $H_N(u)=F(u)-F(P_{\leq N}u)= P_{>N}u(|u|^2+\bar u P_{\leq N}u)+\overline{P_{>N}u}(P_{\leq N}u)^2$.

Next, we turn our attention to the terms where only $P_{\leq N} u$ appears. Having fixed $N\in\N$ large, we will show that 
$$\Bigl\|\int_0^t e^{i(t-s)\Delta} g(nx)F(P_{\leq N} u)(s) ds\Bigr\|_{L_{t,x}^4(\R\times \R^2)}<\varepsilon$$
for $n$ sufficiently large.

One can easily see that for functions $F$ and $G$ we have the following identity:

\begin{equation}
    \begin{split}
        (-\Delta +1)^{-1}(FG)= F (-\Delta +1)^{-1}G&+ (-\Delta +1)^{-1}(\Delta F (-\Delta +1)^{-1} G) \\
        &+2 (-\Delta +1)^{-1} (\nabla F  \cdotp \nabla (-\Delta +1)^{-1}G).
    \end{split}
\end{equation}

Integrating by parts and using (4.8), we obtain that

$$\Bigl|\int_0^t e^{i(t-s)\Delta}g(nx) F(P_{\leq N} u)(s)ds\Bigr|\lesssim |I_1|+|I_2|+|I_3|+|I_4|+ |B_1|+|B_2|$$
where

$$I_1=\int_0^t e^{i(t-s)\Delta}[(-\Delta +1)^{-1}g(nx)] F(P_{\leq N} u)(s)ds,$$ 
$$I_2=\int_0^t e^{i(t-s)\Delta}[(-\Delta +1)^{-1}g(nx)] \frac{d}{ds}F(P_{\leq N} u)(s)ds,$$
$$I_3=\int_0^t e^{i(t-s)\Delta}[(-\Delta +1)^{-1}g(nx)] \Delta F(P_{\leq N} u)(s)ds,$$
$$I_4=\int_0^t e^{i(t-s)\Delta} [\nabla(-\Delta +1)^{-1}g(nx)]\cdotp \nabla F(P_{\leq N} u)(s)ds,$$
$$B_1=F(P_{\leq N}\ u)(t)[(-\Delta +1)^{-1}g(nx)],$$
$$B_2=e^{it\Delta}F(P_{\leq N} u)(0)[(-\Delta +1)^{-1}g(nx)].$$

We will show that the $L^4_{t,x}$ norm of each one of these terms is less than $\varepsilon$ for $n$ sufficiently large. Let $\eta>0$ to be chosen later. The main tools we have at our disposal are Lemma 4.1 and Lemma 4.2. In order to take advantage of Lemma 4.1, we need to approximate $u$ by a compactly supported function $v$ in some appropriate space.

The analysis of each term is slightly different, but the idea behind all of them can be summarized as follows:  First of all, we split the term we are working with into a term that contains $v$ and one that contains $u-v$. The first term can be estimated using the smallness of $(-\Delta +1)^{-1}g(nx)$ on a fixed ball containing the support of $v$ and the smallness of $P_{\leq N}v$ outside it. The essential ingredient for the second term is that $v$ is an approximation of $u$, combined with the boundedness of $(-\Delta +1)^{-1}g(nx)$.

We take $v\in C_c^\infty(\R\times \R^2)$ and $v_0\in C_c^\infty (\R^2)$ such that 
\begin{equation}
\| u-v\|_{L^4_{t,x}(\R\times \R^2)}<\eta,\quad \|u_0-v_0\|_{L^2(\R^2)}<\eta.
\end{equation}
This is possible because $u\in L_{t,x}^4 (\R\times \R^2)$ and $u_0\in L^2(\R^2)$.
Let $R>1$ be such that $\supp v\subset B_R(0)$ and $\supp v_0\subset B_R(0)$ (note that the first ball is in $\R\times \R^2$ and the second in $\R^2$). By Lemma 4.2, there exists $n_0\in \N$ such that for all $n>n_0$ 

\begin {equation}
\|(-\Delta +1)^{-1}g(nx)\|_{L^\infty (|x|\leq 3R)}<\eta\quad \text{and} \quad \|\nabla(-\Delta +1)^{-1}g(nx)\|_{L^\infty (|x|\leq 3R)}<\eta.
\end{equation}
Moreover, we know that
\begin {align}
\|(-\Delta +1)^{-1}g(nx)\|_{L^\infty(\R^2)}+ \|\nabla(-\Delta +1)^{-1}g(nx)\|_{L^\infty(\R^2)}\lesssim\|g\|_{L^\infty(\R^2)}
\end{align}
where the implicit constant does not depend on $n$.

Let's begin with $I_1$. We consider $n>n_0$. By Strichartz,
\begin{equation}
\begin{split}
\|I_1\|_{L_{t,x}^4(\R\times \R^2)}& \leq \|[(-\Delta +1)^{-1}g(nx)] F(P_{\leq N} u)(t,x)\|_{L_{t,x}^\frac 4 3(\R\times \R^2)}\notag\\
&\leq \|I_1'\|_{L_{t,x}^\frac 4 3(\R\times \R^2)}+\|I_1''\|_{L_{t,x}^\frac 4 3(\R\times \R^2)},
\end{split}
\end{equation}
where 
$$I_1'=[(-\Delta +1)^{-1}g(nx)] F(P_{\leq N}v)(t,x),$$
$$I_1''=[(-\Delta +1)^{-1}g(nx)] (F(P_{\leq N} u)(t,x)-F(P_{\leq N}v)(t,x)).$$
Then by H\"older and (4.10)
$$\|I_1'\|_{L_{t,x}^\frac 4 3(|x|\leq 3R)}\lesssim \eta \|P_{\leq N}v\|_{L_{t,x}^4(\R\times \R^2)}^3\lesssim \eta (\|u\|_{L_{t,x}^4}+\eta)^3.$$
An application of Lemma 4.1 for $c=5$ gives us the estimates
\begin {equation}
\|P_{\leq N}v\|_{L_{t,x}^4(|x|>3R)}+\|P_{\leq N}v\|_{L_{t}^4L_x^{12}(|x|>3R)}\lesssim N^{-\frac 9 2} \|v\|_{L_{t,x}^4}\lesssim N^{-\frac 9 2} (\| u\|_{L_{t,x}^4}+\eta).
\end {equation}
Then, by (4.11) and (4.12),
$$\|I_1'\|_{L_{t,x}^\frac 4 3(|x|> 3R)}\lesssim \|P_{\leq N}v\|_{L_{t,x}^4(|x|>R)}^3\lesssim N^{-\frac{27}{2}}\|v\|_{L_{t,x}^4(\R\times \R^2)}^3\lesssim N^{-\frac{27}{2}}(\|u\|_{L_{t,x}^4} +\eta)^3.$$
Finally by H\"older, (4.9) and (4.11)
$$\|I_1''\|_{L_{t,x}^\frac 4 3}\lesssim \| u-v\|_{L_{t,x}^4(\R\times \R^2)} (\| u\|_{L_{t,x}^4(\R\times \R^2)}^2+\|v\|_{L_{t,x}^4(\R\times \R^2)}^2)\lesssim \eta (\| u\|_{L_{t,x}^4} +\eta)^2.$$

Next, we consider $I_2$. Note that, since $ u$ is a solution to (NLS), Strichartz inequality yields
\begin{equation}
    \begin{split}
        \|I_2\|_{L_{t,x}^4(\R\times \R^2)}&\leq \|[(-\Delta +1)^{-1}g(nx)] \frac{d}{dt} F(P_{\leq N} u)(t,x)\|_{L_{t,x}^\frac 4 3(\R\times \R^2)}\notag\\
        &\lesssim\|I_5\|_{L_{t,x}^\frac 4 3(\R\times \R^2)}+\|I_6\|_{L_{t,x}^\frac 4 3(\R\times \R^2)},
    \end{split}
\end{equation}
where
$$I_5=[(-\Delta +1)^{-1}g(nx)] \Delta P_{\leq N} u | P_{\leq N} u|^2,$$
$$I_6=[(-\Delta +1)^{-1}g(nx)] P_{\leq N}(|u|^2  u) | P_{\leq N} u|^2.$$

Starting with $I_5$, we observe that
\begin{equation}
\|\Delta P_{\leq N} u\|_{L_{t,x}^4(\R\times \R^2)}\lesssim N^2 \|   u\|_{L_{t,x}^4(\R\times \R^2)}.
\end {equation}
We consider $I_5'= [(-\Delta +1)^{-1}g(nx)] \Delta P_{\leq N} u | P_{\leq N}v|^2$ and $I_5''= I_5-I_5'$.
Then, by H\"older, (4.9), (4.10), (4.11), (4.12) and (4.13),

$$\|I_5'\|_{L_{t,x}^\frac 4 3(|x|\leq 3R)} \lesssim \eta \|\Delta P_{\leq N} u \|_{L_{t,x}^4} \|P_{\leq N}v\|_{L_{t,x}^4}^2\lesssim \eta N^2 \| u \|_{L_{t,x}^4}(\| u \|_{L_{t,x}^4}+\eta)^2,$$

$$\|I_5'\|_{L_{t,x}^\frac 4 3(|x|> 3R)} \lesssim N^2 \| u \|_{L_{t,x}^4} \|P_{\leq N}v \|_{L_{t,x}^4(|x|>3R)}^2 \lesssim N^{-7} \| u \|_{L_{t,x}^4} (\| u \|_{L_{t,x}^4} +\eta)^2.$$
On the other hand, H\"older inequality and the estimates (4.9), (4.11) and (4.13) give 
$$\|I_5''\|_{L_{t,x}^\frac 4 3} \lesssim N^2 \| u \|_{L_{t,x}^4} \| u-v \|_{L_{t,x}^4} (\| u \|_{L_{t,x}^4}+ \|v \|_{L_{t,x}^4}) \lesssim \eta N^2 \| u \|_{L_{t,x}^4}  (\| u \|_{L_{t,x}^4} +\eta).$$

To estimate $I_6$, we take advantage of the fact that 
\begin {align}
\|P_{\leq N}(| u|^2 u) P_{\leq N} u\|_{L_{t,x}^2(\R\times \R^2)}&\lesssim  \|P_{\leq N}(| u|^2  u)\|_{L_{t}^2 L_x^{4}(\R\times \R^2)} \|P_{\leq N}  u\|_{L_{t}^\infty L_x^{4}(\R\times \R^2)}\notag\\
&\lesssim N^{\frac{3}{2}} \|| u|^2  u\|_{L_{t}^2 L_x^{1}(\R\times \R^2)} N^{\frac{1}{2}} \| u\|_{L_{t}^\infty L_x^{2}(\R\times \R^2)}\notag\\
&\lesssim N^2 \| u\|_{L_{t}^{6} L_x^{3}(\R\times \R^2)}^3 \| u\|_{L_{t}^{\infty} L_x^{2}(\R\times \R^2)}.
\end{align}
We decompose into $I_6'= [(-\Delta +1)^{-1}g(nx)]  P_{\leq N}(| u|^2  u) | P_{\leq N} u| | P_{\leq N}v|$ and $I_6''= I_6-I_6'$. Working similarly to what we did previously, using H\"older and the estimates (4.9), (4.10), (4.11), (4.12) and (4.14), we obtain

$$\|I_6'\|_{L_{t,x}^\frac 4 3(|x|\leq 3R)} \lesssim \eta N^2 \| u\|_{L_{t}^{6} L_x^{3}}^3 \|u\|_{L_{t}^{\infty} L_x^{2}}(\| u \|_{L_{t,x}^4}+\eta)^,$$

$$\|I_6'\|_{L_{t,x}^\frac 4 3(|x|> 3R)} \lesssim N^{-\frac 5 2} \| u\|_{L_{t}^{6} L_x^{3}}^3 \| u\|_{L_{t}^{\infty} L_x^{2}} (\| u \|_{L_{t,x}^4} +\eta),$$

$$\|I_6''\|_{L_{t,x}^\frac 4 3} \lesssim \eta N^2 \| u\|_{L_{t}^{6} L_x^{3}}^3 \| u\|_{L_{t}^{\infty} L_x^{2}}.$$

For $I_3$, we have that
\begin{equation}
    \begin{split}
        \|I_3\|_{L_{t,x}^4}&\leq \|[(-\Delta +1)^{-1}g(nx)] \Delta F(P_{\leq N} u)(t,x)\|_{L_{t,x}^\frac 4 3}\notag\\
        &\lesssim\|I_5\|_{L_{t,x}^\frac 4 3}+\|I_7\|_{L_{t,x}^\frac 4 3},
    \end{split}
\end{equation}
where
$$I_7=[(-\Delta +1)^{-1}g(nx)] P_{\leq N} u | \nabla P_{\leq N} u|^2.$$
We have already estimated $I_5$ and we will treat $I_7$ similarly. Once again, we consider $I_7'=[(-\Delta +1)^{-1}g(nx)]  | \nabla P_{\leq N} u|^2 P_{\leq N}v $ and $I_7''= I_7-I_7'$. H\"older inequality and Bernstein, combined with the estimates (4.9), (4.10), (4.11) and (4.12) yield
$$\|I_7'\|_{L_{t,x}^\frac 4 3(|x|\leq 3R)} \lesssim \eta \|\nabla P_{\leq N} u \|_{L_{t,x}^4}^2 \|P_{\leq N}v\|_{L_{t,x}^4}\lesssim \eta N^2 \| u \|_{L_{t,x}^4}^2 (\| u \|_{L_{t,x}^4}+\eta),$$

$$\|I_7'\|_{L_{t,x}^\frac 4 3(|x|> 3R)} \lesssim N^2 \|u \|_{L_{t,x}^4}^2 \|P_{\leq N}v \|_{L_{t,x}^4(|x|>3R)} \lesssim N^{-\frac 5 2} \| u \|_{L_{t,x}^4}^2 (\| u \|_{L_{t,x}^4} +\eta),$$

$$\|I_7''\|_{L_{t,x}^\frac 4 3} \lesssim N^2 \| u \|_{L_{t,x}^4}^2 \| u-v \|_{L_{t,x}^4} \lesssim \eta N^2 \| u \|_{L_{t,x}^4}^2 .$$

To estimate $I_4$, we use Strichartz inequality once again and perform a decomposition similar to what we did for the previous terms. More precisely,
$$\|I_4\|_{L_{t,x}^4} \lesssim \|[\nabla (-\Delta +1)^{-1}g(nx)] \cdotp \nabla P_{\leq N} u |P_{\leq N} u |^2\|_{L_{t,x}^\frac 4 3} \leq \|I_4'\|_{L_{t,x}^\frac 4 3}+\|I_4''\|_{L_{t,x}^\frac 4 3},$$
where 
$$I_4'=[\nabla(-\Delta +1)^{-1}g(nx)]\cdotp \nabla P_{\leq N} u (t,x) P_{\leq N} u (t,x) P_{\leq N}v(t,x) ,$$
$$I_4''=[\nabla (-\Delta +1)^{-1}g(nx)] \cdotp \nabla P_{\leq N} u (t,x) P_{\leq N} u (t,x) [P_{\leq N} u(t,x)-P_{\leq N}v(t,x)].$$
Then, using H\"older inequality, Bernstein and the estimates (4.9), (4.10), (4.11), (4.12) we get
$$\|I_4'\|_{L_{t,x}^\frac 4 3(|x|\leq 3R)} \lesssim \eta \|\nabla P_{\leq N} u \|_{L_{t,x}^4} \| P_{\leq N} u \|_{L_{t,x}^4} \|v\|_{L_{t,x}^4}\lesssim \eta N \|u \|_{L_{t,x}^4}^2 (\| u \|_{L_{t,x}^4}+\eta),$$

$$\|I_4'\|_{L_{t,x}^\frac 4 3(|x|> 3R)} \lesssim N \| u \|_{L_{t,x}^4}^2 \|P_{\leq N}v \|_{L_{t,x}^4(|x|>3R)} \lesssim N^{-\frac 7 2} \| u \|_{L_{t,x}^4}^2 (\|u \|_{L_{t,x}^4} +\eta),$$

$$\|I_4''\|_{L_{t,x}^\frac 4 3} \lesssim N \| u \|_{L_{t,x}^4}^2 \| u-v \|_{L_{t,x}^4}  \lesssim \eta N \| u \|_{L_{t,x}^4}^2 .$$

Next, we turn our attention to the boundary terms. We start by decomposing $B_1=B_1'+B_1''$, where
$$B_1'=[(-\Delta +1)^{-1}g(nx)]|(P_{\leq N} u)(t)|^2 (P_{\leq N}v)(t) ,$$
$$B_1''=[(-\Delta +1)^{-1}g(nx)]|(P_{\leq N}u)(t)|^2 [(P_{\leq N} u)(t)-(P_{\leq N}v)(t)].$$
We apply H\"older inequality and use Bernstein and the estimates (4.9), (4.10), (4.11), (4.12) to get
$$\|B_1'\|_{L_{t,x}^4(|x|\leq 3R)}\lesssim \eta \|P_{\leq N} v\|_{L_t^{4} L_x^{12}} \|P_{\leq N}  u\|_{L_t^{\infty} L_x^{12}}^2 \lesssim \eta N^2  \| u\|_{L_t^{\infty} L_x^{2}}^2 (\|u\|_{L_{t,x}^4}+\eta),$$
\begin{align*}
\|B_1'\|_{L_{t,x}^4(|x|> 3R)}&\lesssim \|P_{\leq N} v\|_{L_t^{4} L_x^{12}(|x|> 3R)} \|P_{\leq N} u\|_{L_t^{\infty} L_x^{12}}^2\\
&\lesssim N^{-\frac {17}{6}}  \|u\|_{L_t^{\infty} L_x^{2}}^2 (\| u\|_{L_{t,x}^4}+\eta),
\end{align*}
$$\|B_1''\|_{L_{t,x}^4}\lesssim \|P_{\leq N} u- P_{\leq N} v\|_{L_t^{4} L_x^{12}} \|P_{\leq N} u\|_{L_t^{\infty} L_x^{12}}^2 \lesssim \eta N^2  \| u\|_{L_t^{\infty} L_x^{2}}^2 .$$

For $B_2$, Strichartz inequality yields
$$\|B_2\|_{L^4_{t,x}}\lesssim \|F(P_{\leq N}  u_0)(-\Delta +1)^{-1}g(nx)\|_{L^2_{x}(\R^2)}\leq \|B_2'\|_{L^2_{x}(\R^2)}+\|B_2''\|_{L^2_{x}(\R^2)},$$
where
$$B_2'=[(-\Delta +1)^{-1}g(nx)]F(P_{\leq N}v_0),$$
$$B_2''=[(-\Delta +1)^{-1}g(nx)](F(P_{\leq N} u_0)-F(P_{\leq N}v_0)).$$
Then, by (4.10), Bernstein and (4.9),
$$\|B_2'\|_{L^2(|x|\leq 3R)}\lesssim\eta \|P_{\leq N}v_0\|_{L^6}^3\lesssim \eta N^2 (\|u_0\|_{L^2}+\eta)^3.$$
We can use Lemma 4.1 for $c=5$ to see that
$$\|P_{\leq N}v_0\|_{L^6(|x|>3R)}\lesssim N^{-4} \|v_0\|_{L^2}\lesssim N^{-4} (\|u_0\|_{L^2}+\eta).$$
Using (4.11) and the estimate above, we get
$$\|B_2'\|_{L^2(|x|> 3R)}\lesssim \|P_{\leq N}v_0\|_{L^6(|x|>3R)}^3\lesssim  N^{-12} (\|u_0\|_{L^2}+\eta)^3.$$
Finally, by H\"older, Bernstein and (4.9),
\begin{align*}
    \|B_2''\|_{L^2}&\lesssim \|P_{\leq N}u_0- P_{\leq N}v_0\|_{L^6} ( \|P_{\leq N}u_0\|_{L^6}^2 +\|P_{\leq N}v_0\|_{L^6}^2)\\
    &\lesssim  N^{2} \|u_0- v_0\|_{L^2} ( \|u_0\|_{L^2}^2 +\|v_0\|_{L^2}^2)\\
    &\lesssim \eta N^2 (\|u_0\|_{L^2}+\eta)^2.
\end{align*}

All in all, by taking $N$ even larger if necessary, so that $N^{-1}\ll\varepsilon$, we choose $\eta>0$ sufficiently small so that $\eta N^2\ll\varepsilon$. This choice guarantees that 
$$\Bigl\|\int_0^t e^{i(t-s)\Delta} g(nx)F(P_{\leq N} u)(s) ds\Bigr\|_{L_{t,x}^4(\R\times \R^2)}<\varepsilon$$
for $n>n_0$ and thus completes the proof.
\end{proof}

\section{Applications}

As we stated in the Introduction, there are several interesting examples that satisfy the hypotheses of Theorem 1.1. Below we discuss them individually; however, as the following lemma shows, they may be combined.

\begin{lemma}
Convex combinations of functions for which the conditions of Theorem 1.1 hold, also satisfy these conditions.
\end{lemma}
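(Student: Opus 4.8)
The plan is to verify that the defining condition of Theorem~1.1, namely
$$\lim_{n\to\infty}\|(-\Delta+1)^{-1}(g(nx)-\bar g)\|_{L^\infty(|x|\leq R)}=0\quad\text{for every }R>0,$$
is preserved under convex combinations. Suppose $g_1,\dots,g_k\in L^\infty(\R^2)$ satisfy this condition with associated constants $\bar g_1,\dots,\bar g_k\geq 0$, and let $\lambda_1,\dots,\lambda_k\geq 0$ with $\sum_{i=1}^k\lambda_i=1$. Set $g=\sum_{i=1}^k\lambda_i g_i$ and $\bar g=\sum_{i=1}^k\lambda_i\bar g_i$. Then $g\in L^\infty(\R^2)$ since $\|g\|_{L^\infty}\leq\sum_i\lambda_i\|g_i\|_{L^\infty}$, and $\bar g\geq 0$ as a convex combination of nonnegative numbers.

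The key step is linearity of the operator $(-\Delta+1)^{-1}$ together with the dilation structure. First I would observe that
$$g(nx)-\bar g=\sum_{i=1}^k\lambda_i\bigl(g_i(nx)-\bar g_i\bigr),$$
so by linearity of $(-\Delta+1)^{-1}$ and the triangle inequality in $L^\infty(|x|\leq R)$,
$$\bigl\|(-\Delta+1)^{-1}(g(nx)-\bar g)\bigr\|_{L^\infty(|x|\leq R)}\leq\sum_{i=1}^k\lambda_i\bigl\|(-\Delta+1)^{-1}(g_i(nx)-\bar g_i)\bigr\|_{L^\infty(|x|\leq R)}.$$
Each summand tends to $0$ as $n\to\infty$ by hypothesis on $g_i$ (applied with the same fixed $R$), hence the finite sum does as well. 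This establishes that $g$ satisfies the hypothesis of Theorem~1.1 with the constant $\bar g$.

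There is essentially no obstacle here — the statement is a soft consequence of linearity and the triangle inequality, and the only mild point to be careful about is that $(-\Delta+1)^{-1}(g_i(nx)-\bar g_i)$ should be interpreted consistently (e.g.\ via the convolution kernel $K_1$ as in the proof of Lemma~4.2, or equivalently on the Fourier side), so that the splitting above is legitimate; since $g_i(nx)-\bar g_i\in L^\infty(\R^2)$ uniformly in $n$, the operator is well defined on each piece and commutes with finite linear combinations. One may also note, for the reader's convenience, that the induction over $k$ reduces everything to the case $k=2$, $g=\lambda g_1+(1-\lambda)g_2$, which is exactly the computation above. This completes the proof of the lemma, and in Section~5 it will allow us to freely mix the basic examples (trigonometric polynomials, quasi-periodic functions, periodic functions, alloy-type coefficients) when checking the hypotheses of Theorem~1.1.
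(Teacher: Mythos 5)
Your argument is correct and matches the paper's proof: both use linearity of $(-\Delta+1)^{-1}$ and the triangle inequality in $L^\infty(|x|\leq R)$ to pass the limit through the convex combination, with the paper reducing to $k=2$ at the outset as you note at the end. No gaps.
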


\begin{proof}
It suffices to show it for a convex combination of two functions. 

Let $g_1, g_2\in L^\infty(\R^2)$ and suppose there exist nonnegative constants $\bar g_1, \bar g_2$ such that for every $R>0$
$$\lim_{n\to \infty}\|(-\Delta +1)^{-1}(g_1(nx)-\bar g_1)\|_{L^\infty (|x|\leq R)}=0$$
and
$$\lim_{n\to \infty}\|(-\Delta +1)^{-1}(g_2(nx)-\bar g_2)\|_{L^\infty (|x|\leq R)}=0.$$
Let $0<\lambda<1$ and $g=\lambda g_1 + (1-\lambda) g_2$, $\bar g=\lambda \bar g_1 + (1-\lambda) \bar g_2$. Clearly $g\in L^\infty(\R^2)$ and $\bar g \geq0$. Moreover, for every $x\in \R^2$
\begin{equation}
\begin{split}
|(-\Delta +1)^{-1}(g(nx)-\bar g)|\leq \lambda & |(-\Delta +1)^{-1}(g_1(nx)-\bar g_1)|\notag\\
&+ (1-\lambda) |(-\Delta +1)^{-1}(g_2(nx)-\bar g_2)|
\end{split}
\end{equation}
so 
$$\lim_{n\to \infty}\|(-\Delta +1)^{-1}(g(nx)-\bar g)\|_{L^\infty (|x|\leq R)}=0.$$
\end{proof}

\subsection{Trigonometric Polynomials}
We will show that if $g$ is a trigonometric polynomial, i.e. 
$$g(x)=\sum_{k\in \mathbb Z^2}c_k e^{i k \cdotp x}$$
with $c_k\in \mathbb C$ for every $k\in \mathbb Z^2$ and only finitely many of them are nonzero, and in addition $c_0\geq 0$, then $g$ satisfies the conditions of Theorem 1.1. 

It is obvious that $g\in L^\infty(\R^2)$. The natural choice for $\bar g$ here is $\bar g=c_0\geq 0$. We only need to prove that $g$ satisfies (1.1). It is enough to prove it in the case when $g$ is a character, i.e. 
$$g(x)=e^{i k\cdotp x}$$
for some $k\in\mathbb Z^2$.

For $k=0$, one can see that (1.1) is trivially true since $g(nx)-\bar g=0$.

For $k\neq 0$, we get $\bar g=0$. Moreover,
$$(-\Delta +1)^{-1} g(nx) =\frac {1}{n^2|k|^2 +1}e^{in k\cdotp x},$$
so
$$\lim_{n\to\infty} \|(-\Delta +1)^{-1} g(nx)\|_{L^\infty(\R^2)}=\lim_{n\to\infty}\frac {1}{n^2|k|^2 +1}=0. $$

\subsection{Continuous (quasi-)periodic functions}

Consider $G:\R^d\to \R$ a $2\pi$-periodic continuous function such that $\hat G(0)\geq 0$ and $A$ a $d\times 2$ matrix whose rows are linearly independent over $\mathbb Z$. Then the function $g:\R^2\to \R$ given by
$$g(x)=G(Ax)$$
is quasi-periodic. Note that this also covers the case where $g$ is periodic with respect to some (not necessarily rectangular) lattice. We will show that $g$ satisfies the assumptions of Theorem 1.1.

We choose $\bar g= \hat G(0)$; we may assume that $\bar g= \hat G(0)=0$. 

Let $\varepsilon>0$. Since $G$ is continuous, it can be approximated in $L^\infty(\R^2)$ by trigonometric polynomials; there exists trigonometric polynomial $f$ such that 
$$\|G-f\|_{L^\infty(\R^d)}<\frac {\varepsilon} {(2\pi)^d}.$$
By (4.3), 
$$\|(-\Delta +1)^{-1} (g(nx)-(f\circ A)(nx))\|_{L^\infty(\R^2)}\lesssim \|g-f\circ A\|_{L^\infty(\R^2)}\lesssim \varepsilon$$
uniformly in $n$.

On the other hand, $f$ is a trigonometric polynomial, so $f(x)=\sum_{k\in \mathbb Z^2}c_k e^{ik\cdotp x}$, where $c_k\in \mathbb C$ for all $k\in \mathbb Z^2$ and all but finitely many of them are zero. By the way $f$ was defined, we have that $|c_0|<\frac \varepsilon 2$. Then
$$(-\Delta +1)^{-1} (f\circ A)(nx) =c_0 + \sum_{k\in\mathbb Z^d\setminus \{0\}} c_k \frac {1}{n^2|kA|^2 +1}e^{in k\cdotp A x},$$
so for $n$ sufficiently large
$$\|(-\Delta +1)^{-1} (f\circ A)(nx)\|_{L^\infty(\R^2)}\leq \frac \varepsilon 2 +\frac {1}{n^2} \sum_{k\in\mathbb Z^d\setminus \{0\}}  \frac {|c_k|}{|kA|^2} <\varepsilon.$$
Note that $|kA|> 0$ for all $k\in\mathbb Z^d\setminus \{0\}$ and only finitely many of them are present in our sum since $c_k=0$ for all but finitely many $k$, so $\min_{k\in\mathbb Z^d\setminus \{0\}} |kA|>0$.

Combining these two estimates we obtain that
$$\lim_{n\to\infty} \|(-\Delta +1)^{-1} g(nx)\|_{L^\infty(\R^2)}=0. $$

\subsection{Bounded periodic functions}
Let $g\in L^\infty([0, 2\pi)^2)$ with $\hat g(0)\geq 0$. We choose $\bar g=\hat g(0)$. Once again, we can assume that $\bar g=\hat g(0)=0$. Then
$$(-\Delta +1)^{-1} g(nx) =\sum_{k\in\mathbb Z^2\setminus \{0\}} \hat g(k) \frac {1}{n^2|k|^2 +1}e^{in k\cdotp x},$$
so by Cauchy-Schwarz
$$\|(-\Delta +1)^{-1} g(nx)\|_{L^\infty(\R^2)} \leq \|\hat g\|_{l^2 (\mathbb Z^2)} \frac {1}{n^2} \Big(\sum_{k\in\mathbb Z^2\setminus \{0\}}  \frac {1}{|k|^4}\Big)^{\frac 12}\lesssim \|g\|_{L^\infty([0, 2\pi)^2)} \frac {1}{n^2} $$
and we can conclude that 
$$\lim_{n\to\infty} \|(-\Delta +1)^{-1} g(nx)\|_{L^\infty(\R^2)}=0. $$

\remark In the above we worked out the case that $g$ is $2\pi$-periodic. However, the same argument can be modified to show that the result holds for functions periodic with respect to any lattice.

\remark Note that in this case $g$ is not necessarily continuous, unlike the previous examples.

\remark In the three previous examples we obtained an estimate stronger than (1.1). Of course it is very advantageous that Theorem 1.1 requires only a weaker assumption, as there are cases where this stronger estimate is not available but (1.1) still holds; the following example falls in this case.

\subsection{Bernoulli alloy-type model}

Let $\phi:\R^2\to \mathbb C$ such that $\langle x\rangle^{2+\epsilon} \phi \in L^\infty (\R^2)$ for some $\epsilon>0$. Also consider $X_k$, $k\in \mathbb Z^2$, independent identically distributed Bernoulli random variables that take the value 1 with probability $\frac 1 2$ and the value -1 with probability $\frac 1 2$. We are interested in the function
$$g(x)=\sum_{k\in \mathbb Z^2}X_k \phi(x-k).$$
We will show that $g$ satisfies the hypotheses of Theorem 1.1 almost surely. 

First of all, it is easy to see that $g\in L^\infty (\R^2)$. Since $\langle x\rangle^{2+\epsilon} \phi \in L^\infty (\R^2)$, 
$$|g(x)|\leq \sum_{k\in \mathbb Z^2}|\phi(x-k)|\lesssim 1$$ for every $x\in \R^2$.

The natural choice of $\bar g$ here is $\bar g=0$. Note that for every $x\in \R^2$
$$\E[g(x)]= \E[\sum_{k\in \mathbb Z^2}X_k  \phi(x-k)]=0.$$

Showing that $g$ satisfies (1.1) requires more work. Once again, it is convenient to decompose in high and low frequencies. For high frequencies, the proof of Lemma 4.2 supplies us with useful estimates, the majority of which hold for all bounded functions as we remarked earlier. For low frequencies, we have the added advantage that the integral kernel associated with the operator $(-\Delta+1)^{-1}P_{\leq N}$ is Schwartz. 

Let $\varepsilon>0$. Since $g\in L^\infty (\R^2)$, we get (4.1) uniformly in $n$, so we can fix $N\in 2^\N$ large enough so that 
$$\|(-\Delta +1)^{-1} P_{>N} g(nx)\|_{L^{\infty}(\R^2)}<\varepsilon$$
for all $n\in\N$.

Having fixed $N$ large, we turn our attention to $(-\Delta +1)^{-1} P_{\leq N} g(nx)$. Let $R\in \N$ and consider the ball $B_R:=\{x\in \R^2: |x|\leq R\}$. Let $n\in\N$ and consider squares of the form $S^m:= \frac m n + [0, \frac 1 n)^2$ for $m\in \mathbb Z^2$. Observe that $B_R$ can be covered by $O(n^2 R^2)$ many of these squares, say $B_R\subset \bigcup_{m\in I} S^m$ for some $I\subset \mathbb Z$ with $|I|=O(n^2 R^2)$. We will first show that, for $n$ sufficiently large, $|(-\Delta +1)^{-1} P_{\leq N} g(nx)|$ is small at the bottom left corners of our little squares, i.e. at the points $\frac m n$ for $m\in I$, and then that the difference of the values of $(-\Delta +1)^{-1} P_{\leq N} g(nx)$ between the bottom left corner and any other point of the square $S^m$ is small.

We begin by estimating the function at the bottom left corners. In the following, we denote by $K$ the integral kernel that satisfies $(-\Delta +1)^{-1}P_{\leq N} f(x)=(K\ast f)(x)$.

Fix $x\in \R^2$ and $u,v \in \R^2$. Then using Riemann sums one can see that
\begin{equation}
    \lim_{n\to \infty} \frac {1}{n^2} \sum_{k\in \mathbb Z^2} K(x-\frac{u+k}{n}) \overline{K(x-\frac{v+k}{n})}= \int_{\R^2}|K(x-z)|^2 dz= \int_{\R^2}|K(z)|^2 dz.
\end{equation}
Then for each $k\in \mathbb Z^2$ a change of variables gives
$$| \int_{\R^2} K(x-y) \phi(ny-k) dy|^2=\frac{1}{n^4} \int_{\R^2}\int_{\R^2} K(x-\frac{u+k}{n}) \overline{K(x-\frac{v+k}{n})} \phi(u) \overline{\phi(v)} du dv,$$
so
\begin{equation}
    \lim_{n\to \infty} n^2 \sum_{k\in \mathbb Z^2} | \int_{\R^2} K(x-y) \phi(ny-k) dy|^2 =  |\int_{\R^2}\phi(z) dz|^2 \int_{\R^2}|K(z)|^2 dz.
\end{equation}
The interchange of integration and the infinite sum is justified by the Monotone Convergence Theorem, and the Dominated Convergence Theorem allows us to use (5.1). This result suggests that there exists $n_0\in \N$ such that for all $n>n_0$ and for all $x\in \R^2$
\begin{equation}
    n^2 \sum_{k\in \mathbb Z^2} | \int_{\R^2} K(x-y) \phi(ny-k) dy|^2 <2 |\int_{\R^2}\phi(z) dz|^2 \int_{\R^2}|K(z)|^2 dz.
\end{equation}

Now we fix $m\in I$ and consider the square $S^m$. Let $x_m=\frac m n$ be the bottom left corner point of this square. We want to calculate $\E[|(-\Delta +1)^{-1} P_{\leq N} g(n \cdot)(x_m)|^4]$. Since the random variables $X_k$, $k\in\mathbb Z^2$, are independent with mean 0 and variance 1, 
\begin{equation}
    \begin{split}
        \E[|(-\Delta +1)^{-1} P_{\leq N} g(n \cdot)(x_m)|^4]&= \big( \sum_{k\in \mathbb Z^2} | \int_{\R^2} K(x_m-y) \phi(ny-k) dy|^2 \big) ^2\notag\\
        &\leq 4 \frac {1}{n^4} |\int_{\R^2}\phi(z) dz|^4 \big( \int_{\R^2}|K(z)|^2 dz \big)^2.
    \end{split}
\end{equation}
Therefore
\begin{equation}
    \begin{split}
        \E[\sup _{m\in I}|(-\Delta +1)^{-1} P_{\leq N} g(n \cdot)(x_m)|^4]&\lesssim |I| \frac {1}{n^4} |\int_{\R^2}\phi(z) dz|^4 \big( \int_{\R^2}|K(z)|^2 dz \big)^2\\
        &=O(n^{-2}).
    \end{split}
\end{equation}
This implies that
$$\mathbb P[\sup_{m\in I}|(-\Delta +1)^{-1} P_{\leq N} g(n \cdot)(x_m)|\geq \varepsilon] \lesssim \frac {1}{\varepsilon^4} n^{- 2},$$
so
$$\sum_{n>n_0} \mathbb P[\sup_{m\in I}|(-\Delta +1)^{-1} P_{\leq N} g(n \cdot) (x_m)|\geq \varepsilon] <\infty.$$
Then by Borel-Cantelli we conclude that almost surely 
\begin{equation}
\sup_{m\in I}|(-\Delta +1)^{-1} P_{\leq N} g(n \cdot)( x_m)|<\varepsilon
\end{equation}
for all but finitely many $n\in \N$.

We are left to estimate the difference between the values of $(-\Delta +1)^{-1} P_{\leq N} g(n \cdot)$ at the bottom left corner of $S^m$ and any other point of the square. 

Fix $x_1, x_2\in \R^2$ and $u \in \R^2$. Using Riemann sums once again we conclude that
\begin{equation}
    \lim_{n\to \infty} \frac {1}{n^2} \sum_{k\in \mathbb Z^2} \big| K(x_1-\frac{u+k}{n})- K(x_2-\frac{u+k}{n}) \big|= |x_1-x_2|\int_{\R^2}|\nabla K(z)|dz.
\end{equation}

Arguing as earlier, 
\begin{equation}
    \lim_{n\to \infty}\sum_{k\in \mathbb Z^2}  \int_{\R^2} \big|K(x_1-y)-K(x_2-y)\big| | \phi(ny-k)| dy= |x_1-x_2| \int_{\R^2}|\phi|  \int_{\R^2}|\nabla K| 
\end{equation}
and consequently 
\begin{equation}
    \sum_{k\in \mathbb Z^2}  \int_{\R^2} \big|K(x_1-y)-K(x_2-y)\big| | \phi(ny-k)| dy<2 |x_1-x_2| \int_{\R^2}|\phi|  \int_{\R^2}|\nabla K| 
\end{equation}
for all $n>n_0$ for some $n_0\in \N$ and for all $x_1, x_2\in \R^2$.

Fix $m\in I$ and consider the points of the square $S^m$, $x_m=\frac m n$ and $x$. Since $|X_k|=1$ with probability 1, the absolute value of the difference between the values of $(-\Delta +1)^{-1} P_{\leq N} g(n \cdot)$ at $x_m$ and $x$ is bounded by the left hand side of (5.8) with $x_1, x_2$ replaced by $x_m, x \in S^m$. Recall that $S^m$ is a square of side length $\frac 1 n$, therefore 
\begin{equation}
    \sup_{x\in S^m}|(-\Delta +1)^{-1} P_{\leq N} g(n \cdot)(x_m)-(-\Delta +1)^{-1} P_{\leq N} g(n \cdot)(x)|
    <4 \frac{1}{n} \int_{\R^2}|\phi|  \int_{\R^2}|\nabla K| .
\end{equation}

Then
\begin{equation}
    \begin{split}
        \E[\sup_{x\in S^m}|(-\Delta +1)^{-1} P_{\leq N} g(n \cdot)(x_m)-(-\Delta +1)^{-1} P_{\leq N} g(n \cdot)(x)|^4]
        \lesssim \frac {1}{n^4} 
    \end{split}
\end{equation}
and the same argument as before suggests that almost surely 
\begin{equation}
\sup_{m\in I} \sup_{x\in S^m}|(-\Delta +1)^{-1} P_{\leq N} g(n \cdot)(x_m)-(-\Delta +1)^{-1} P_{\leq N} g(n \cdot)(x)|<\varepsilon
\end{equation}
for all but finitely many $n\in \N$.

Combining (5.5) and (5.11) we conclude that almost surely for all but finitely many $n\in \N$
\begin{equation}
\sup_{x\in B_R}|(-\Delta +1)^{-1} P_{\leq N} g(n \cdot)(x)|<\varepsilon.
\end{equation}

\begin{remark}
The only properties of $X_k$ used in the above were that they are i.i.d. bounded random variables with $\E[X]=0$, so the same argument can be applied to any alloy-type model with these properties. Moreover, the result extends for any $X_k$ i.i.d. bounded random variables with $\E[X]\geq 0$ and $\int\phi \geq 0$. This can be seen by splitting $g=g_1+g_2$ where
$$g_1(x):=\sum_{k\in \mathbb Z^2}(X_k-\E[X])\phi(x-k),$$ $$g_2(x):=\E[X] \sum_{k\in \mathbb Z^2}\phi(x-k).$$ 
The fact that $X_k-\E[X]$ are mean zero i.i.d. random variables ensures that $g_1$ satisfies the conditions of Theorem 1.1, and $g_2$ is bounded periodic with $\int g_2\geq 0$. Then Lemma 5.1 implies that $g$ satisfies the conditions of Theorem 1.1.
\end{remark}

\end{document}